\documentclass[a4paper,10pt]{amsart}
\usepackage[utf8]{inputenc}
\usepackage{amsthm} %important for theorem style and numbering, see explanation below
\usepackage{amsmath}
\usepackage{bm}
\usepackage{amsfonts}
\usepackage{amssymb}
\usepackage{mathtools}
\usepackage{appendix}
\usepackage{mathrsfs}
\usepackage{setspace}
\usepackage{xcolor}
\usepackage{todonotes}
%\usepackage{pgf,tikz}
%\usetikzlibrary{arrows}
%\usetikzlibrary[patterns]
%\usepackage[a4paper, left=3.5cm, right=3.5cm, top=4cm, bottom=4cm]{geometry}
%\usepackage{thmtools} % solves problem of shared counters in statements and autoref
\usepackage[foot]{amsaddr}
%REFERENCE PACKAGES BELOW
%hyperref and cleveref to be called always last and in that order
\usepackage[pdfdisplaydoctitle,colorlinks,breaklinks,urlcolor=blue,linkcolor=blue,citecolor=blue]{hyperref} 
\usepackage[capitalise,nameinlink]{cleveref}
% [noabbrev] to expand abbreviations as  ``Eq.''

%%%%%%%%%%%%%%%%%%%%%%%%%%%%%%%%%%%%%%%%%%%%%%%%%%%%%%%%%%%
%BOLD AND CURSIVE CAPITAL LETTERS
\newcommand{\A}{\mathcal{A}}

\newcommand{\B}{\mathcal{B}}
\newcommand{\C}{\mathbb{C}}
\newcommand{\cyl}{\mathcal{C}}
\newcommand{\D}{\mathcal{D}}

\newcommand{\EE}{\mathbb{E}}
\newcommand{\F}{\mathcal{F}}

\newcommand{\M}{\mathcal{M}}
\newcommand{\N}{\mathbb{N}}
\newcommand{\Nsc}{\mathcal{N}}

\newcommand{\PP}{\mathbb{P}}

\newcommand{\R}{\mathbb{R}}
\renewcommand{\S}{\mathscr{S}}
\newcommand{\T}{\mathcal{T}}
\newcommand{\U}{\mathcal{U}}

\newcommand{\XX}{\mathcal{X}}

%%%%%%%%%%%%%%%%%%%%%%%%%%%%%%%%%%%%%%%%%%%%%%%%%%%%%%%%%%
%MATH COMMANDS AND OPERATORS

\let\div\relax
\DeclareMathOperator{\div}{div}

\renewcommand{\epsilon}{\varepsilon}

\let\div\relax
\DeclareMathOperator{\div}{div}

\newcommand{\de}{\partial}
%%%%%%%%%%%%%%%%%%%%%%%%%%%%%%%%%%%%%%%%%%%%%%%%%%%%%%%%%%
%MATH PARENTHESES

\newcommand{\set}[1]{\left\{#1\right\}}
\newcommand{\pa}[1]{\left(#1\right)}
\newcommand{\bra}[1]{\left[#1\right]}

\newcommand{\norm}[1]{\left\|#1\right\|}
\newcommand{\brak}[1]{\left\langle#1\right\rangle}

\newcommand{\expt}[2][]{\mathbb{E}_{#1}\left[#2\right]}

%%%%%%%%%%%%%%%%%%%%%%%%%%%%%%%%%%%%%%%%%%%%%%%%%%%%%%%%%%%%%%%%%%%%%%%
%how does numbering work in amsthm?
%\newtheorem{〈env name〉}{〈text 〉}[〈parent counter 〉]
%\newtheorem{〈env name〉}[〈shared counter 〉]{〈text 〉
%if no parent counter is specified in the first place, numbers are naturals

\newtheorem{theorem}{Theorem}%[section]
\newtheorem{definition}[theorem]{Definition}
\newtheorem{hypothesis}{Hypothesis}
\newtheorem{corollary}[theorem]{Corollary}
\newtheorem{lemma}[theorem]{Lemma}
\newtheorem{proposition}[theorem]{Proposition}

\theoremstyle{remark}
\newtheorem{remark}[theorem]{Remark}
\newtheorem{open}[theorem]{Open Question}
\numberwithin{equation}{section}

%%%%%%%%%%%%%%%%%%%%%%%%%%%%%%%%%%%%%%%%%%%%%%%%%%%%%%%%%%%%%%%%%%%%%%

%%%%%%%%%%%%%%%%%%%%%%%%%%%%%%%%%%%%%%%%%%%%%%%%%%%%%%%%%%%%%%%%%%%%%%%%

\title[Independently Scattered Invariant Measures]{Infinitesimal invariance of completely Random Measures for 2D Euler Equations}
\author[F. Grotto]{Francesco Grotto}
\address{Universit\'e du Luxembourg, Maison du Nombre, 6 Avenue de la Fonte, 4364 Esch-sur-Alzette, Luxembourg}
\email{francesco.grotto@uni.lu}
\author[G. Peccati]{Giovanni Peccati}
%\address{Universit\'e du Luxembourg, Maison du Nombre, 6 Avenue de la Fonte, 4364 Esch-sur-Alzette, Luxembourg}
\email{giovanni.peccati@uni.lu}
\keywords{2d Euler Equation; Completely Random Measures; Invariant Measures; Malliavin Calculus; Multiple Integrals}
\subjclass[2010]{primary: 58D20, 35Q31; secondary: 76B03, 35R60, 47B33}
\date{31st August 2021}

\begin{document}

\begin{abstract}	
We consider suitable weak solutions of 2-dimensional Euler equations on bounded domains,
and show that the class of completely random measures is infinitesimally invariant for the dynamics.
Space regularity of samples of these random fields falls outside of the well-posedness regime
of the PDE under consideration, so it is necessary to resort to stochastic integrals
with respect to the candidate invariant measure in order to give a definition of the dynamics.
Our findings generalize and unify previous results on Gaussian stationary solutions of Euler equations
and point vortices dynamics. 
We also discuss difficulties arising when attempting to produce a solution flow for Euler's equations
preserving independently scattered random measures.
\end{abstract}

\maketitle

%%%%%%%%%%%%%%%%%%%%%%%%%%%%%%%%%%%%%%%%%%%%%%%%%%%%%%%%%%%%%%%%%%%%%%%%%%%%%%%%%%%%%%%%%%%%%
\section{Introduction}\label{sec:introduction}

``Completely random'' or ``independently scattered'' measures
(the precise definition of which we defer to subsequent sections)
are random fields indexed by collections of measurable sets, 
whose restrictions to disjoint regions of the underlying space are stochastically independent. 
These objects are by now fundamental components of modern stochastic analysis: 
for instance, completely random measures appear in the stochastic integral representation 
of stationary random fields on manifolds \cite{AdlerTaylor07, Yadrenko83, MarinucciPeccati11} 
and L\'evy processes \cite{sato}, 
and they are the axis around which stochastic differential calculus is built, 
both in a continuum \cite{Nualart1995, NouPec12} and discrete setting \cite{LastPenrose18, PeccatiTaqqu10}. 

The aim of this paper is to illustrate a distinguished use of completely random measures in the context of 
\emph{(deterministic) partial differential equations with random initial data}:
we will show how such random fields constitute natural candidates as invariant measures
for 2-dimensional Euler's equations, and {illustrate} how an appropriate notion of \emph{weak solution}
relying on stochastic integrals can be used to study the PDE dynamics preserving these random fields.
Our approach involves the simultaneous use of Gaussian and discrete 
(more precisely, compound Poisson) random measures, and is meant to extend and unify earlier works on the matter, where Gaussian and discrete initial conditions are dealt with separately.

We observe that our study will bring to the forefront an important collection 
of random variables that are naturally attached to random measures, that is, 
\emph{multiple stochastic integrals} \cite{KW, PeccatiTaqqu10}, 
that we will study in a dynamical setting. 
These objects have recently played a crucial role in the asymptotic analysis of local 
geometric quantities of random fields --- see \cite{MarinucciPeccati11, NPR19} 
for some representative applications in this direction.

\subsection{2D Euler's Equations and Heuristics on Invariant Measures}
We refer the reader to the classic monograph \cite{Marchioro1994} for a detailed 
introduction to the topics discussed in the present section.
Let $\D\subset \R^2$ be a simply connected, bounded open domain with smooth boundary.
We consider 2-dimensional Euler's equations on $\D$ in terms of the vorticity $\omega$,
\begin{equation}\label{eq:eulero}
	\begin{cases}
	\de_t \omega(t,x)+ u(t,x)\cdot \nabla \omega(t,x)=0, &x\in\D\\
	\omega(t,x)=\nabla^\perp\cdot u(t,x),\quad \div u(t,x)=0, &x\in\D\\
	u(t,x)\cdot \hat n=0, &x\in\partial\D,
	\end{cases}
\end{equation}
where the velocity field $u$ is expressed in terms of $\omega$ by means of the Biot-Savart law
\cite[Section 1.2]{Marchioro1994}, 
\begin{equation*}
	u=K\ast \omega,\quad K(x,y)=\nabla^\perp_x G(x,y)
\end{equation*}
with $\nabla^\perp=(-\de_2,\de_1)$ and $G=(-\Delta)^{-1}$ indicates the Green function 
of the Laplace's operator with Dirichlet boundary conditions.
Since $G$ has a logarithmic singularity, $K$ is a singular convolution kernel.

Equations \cref{eq:eulero} are in the form of a (nonlinear) transport equation,
driven by the divergence-less vector field $u$, and the system is well-posed
for initial data $\omega_0\in L^\infty(\D)$, see \cite{Judovic1963}.
In this regime, if $\Phi_t:\D\to\D$ is the flow of $u$
at time $t$, the evolution of vorticity is thus given by $\omega_t\circ \Phi_t=\omega_0$.
This fact provides a remarkable intuition 
for candidate invariant measures of \cref{eq:eulero},
which we now present from a mostly heuristic perspective.

Let us fix a probability space $(\Omega,\F,\PP)$,
and consider a random measure $M$, that is, $M$ a $\sigma$-additive
map from the class $\B$ of Borel sets of $\D$ to the Banach space $L^1(\Omega,\F,\PP)$,
satisfying the following:

\begin{hypothesis}\label{hyp:main} For any $A,B\in\B$,
	\begin{itemize}
		\item[a)] if $A$ and $B$ are disjoint (up to a Lebesgue-negligible set),
		the random variables $M(A)$ and $M(B)$ are independent;
		\item[b)] if $A$ and $B$ have the same Lebesgue measure, $|A|=|B|$,
		random variables $M(A)$ and $M(B)$ have the same distribution. 
	\end{itemize}
\end{hypothesis}

Under this assumption, since the flow $\Phi_t$ of the divergence-less vector field $u$ preserves the Lebesgue measure on $\D$,
we infer that the image random measure $M_t(A)=M(\Phi_{-t}A)$ has the same law of $M$.
Indeed, by \cref{hyp:main} b), for fixed $A$ we have that $M(A)$ and $M_t(A)$
have the same law, {and it is an easy exercise to show (by using the properties of $\Phi_t$) 
that the vectors $(M(A_1),\dots, M(A_n))$ and $(M_t(A_1),\dots, M_t(A_n))$
have the same distribution for every choice of measurable sets $A_1,\dots, A_n$.}

A natural heuristic claim is thus that a random vorticity distribution $\omega$, such that
its (suitably defined) integrals $\int_A \omega(x)dx=M(A)$ are a random measure as above,
is preserved by the dynamics, or in other words, its law is an invariant measure for \cref{eq:eulero}.  
However, the above reasoning only makes sense if $\Phi_t$ is regular enough,
and when vorticity is taken to be a random measure satisfying \cref{hyp:main}
its samples are not even functions, the velocity field is all but regular: in fact it is not trivial
even to give a sensible meaning to the dynamics.

A weak formulation of \cref{eq:eulero} allowing solutions with poor space regularity
has been known since the works of Delort and Schochet \cite{Delort1991,Schochet1995},
but it does not allow to define solutions starting from a given sample of a random field $M$ as above.
The next fundamental step to be considered consists in interpreting
integrals involving $\omega$ in the weak formulation as \emph{(multiple) stochastic integrals}
with respect to the random integrator $ \omega(x)dx \sim M$.
This point of view was first considered by Flandoli \cite{Flandoli2018} in the case 
where $M$ is a Gaussian measure satisfying \cref{hyp:main},
that is, when $M$ is a multiple of a \emph{white noise};
in that work it was also discussed how such a notion of solution is a sensible one,
producing approximations of Gaussian solutions with smooth solutions and scaling limits of point vortices systems.
Related works \cite{Grotto2020b, Grotto2020c} have analyzed more general Gaussian invariant measures,
also in cases where the PDE dynamics include stochastic forcing.

\subsection{Towards Equilibrium Flows}\label{ssec:flow}

Our ideal goal is now to produce a solution flow of Euler's equations \cref{eq:eulero} 
preserving a given completely random measure.
In other words, given a completely random measure $M$, we would like to accomplish the following tasks:

\begin{itemize}
\item[\bf (a)] identify $M$ with a random element taking values in a suitable space $\XX$ of generalised functions, 
in such a way that the triple $(\XX, \mathbb{X}, m)$ --- where $m$ and $\XX$ are, respectively, the law of and the $\sigma$-field generated by $M$ --- is a probability space;

\item[\bf (b)] provide a sensible notion of weak solutions of the PDE taking values in $\XX$,
exploiting the probability measure $m$ defined on it;

\item[\bf (c)] build a measure-preserving one-parameter group of transformations
$\T_t:\XX\to\XX$, $t\in\R$, whose trajectories can be identified with weak solutions of point {\bf (b)}. 
\end{itemize}

\begin{remark} We will simply write $(\XX, m)$ to denote the triple $(\XX, \mathbb{X}, m)$ evoked above, 
	tacitly assuming that the underlying $\sigma$-field is the one generated by $M$. 
\end{remark}

Fully realising task {\bf (c)} is an open problem even in the aforementioned purely Gaussian case,
in which the existence of solutions has been shown \cite{Albeverio1990,Flandoli2018} only in the following sense:
for almost any sample of Gaussian white noise $W$, there exists a continuous map $\omega\in C([0,T],\S'(\D))$,
taking values in the space $\S'(\D)$ of generalized functions on $\D$, such that
\begin{itemize}
	\item $\omega_0=W$ and, under the law of $W$, the random element $\omega_t$ has the law of $W$, for all $t\in (0,T]$ ;
	\item Euler's equation is satisfied in a suitable weak form (detailed in \cref{sec:weaksol}).
\end{itemize}
This is clearly weaker compared to existence of a solution flow.

At present, building a solution flow (in the sense of {\bf (a)}---{\bf (c)}) 
in the even broader setting of completely random measures, is out of reach. 
In this contribution, we will describe a precise strategy for tackling this objective 
by the Koopman-von Neumann approach introduced in this fluid dynamics context by
\cite{Albeverio2002,Albeverio2003a,Albeverio2008,Albeverio2010}.
Although we will fall short of fully realising our program (see Section \cref{sec:conclusion} for a detailed discussion), 
our main contribution \cref{thm:main} represents a first, essential step.

Our way of attacking the matter is to follow the steps detailed in the next five points, 
of which only the first four will be completely dealt with in the present work.

\begin{enumerate}
	\item We will first select a collection of random measures $M$ satisfying \cref{hyp:main},
	(the restriction being finiteness of moments)
	and prove in Lemma \ref{cor:randomdistr} that a suitable restriction of $M$ lives a.s. 
	in a subspace $\XX$ of generalized functions on $\D$. 
	We will then focus on the particular set of \emph{cylinder} observables associated with
	 the probability space $(\XX,m)$.
	\item In \cref{ssec:weaksol}, we will then consider a suitable weak formulation of 
	Euler's equations \cref{eq:eulero}, allowing solutions taking values
	in the phase space $\XX$: in fact, and more precisely, the notion of solution we put forward
	makes essential use of the probability measure $m$, since the nonlinearity of the PDE is too singular
	to be defined for generic elements of $\XX$, and solutions are defined only almost surely with respect
	to $m$, in terms of stochastic integrals.
	\item By Koopman's lemma \cite{Goodrich1980}, we argue in \cref{eq:eulero} that a solution flow 
	preserving $m$ would induce a strongly continuous group $\U=e^{it\A}$ of unitary transformations on $L^2(\XX,m)$, 
	and in sight of the previous point we deduce the form that the generator $\A$ of such a group 
	must take on cylinder observables.
	\item Our main result \cref{thm:main} is that the expression for $\A$ deduced in the previous point 
	defines a symmetric operator on $L^2(\XX,m)$, and that it has at least a self-adjoint extension 
	generating a strongly continuous group of unitary operators.
	\item As argued in \cref{sec:conclusion}, if one could prove that the latter group of operators 
	preserve $L^\infty(\XX,m)$, a converse of Koopman's lemma would imply that the group 
	consists of Koopman operators of a measure-preserving flow on $(\XX,m)$,
	and because of the way we define $\A$ this would be a solution flow for Euler's equation 
	in the sense of {\bf (a)}---{\bf (c)}. 
\end{enumerate}

As anticipated, Point (5) will not be completed in the present paper: 
should it be done, this would actually be the first uniqueness result for 
Euler's dynamics in a very low regularity regime.

The rest of the paper is organized according to the list of items (1)-(5).
In particular our main result, \cref{thm:main}, is proved in \cref{sec:koopman},
and in \cref{sec:conclusion} we discuss more precisely point (5),
together with a number of further remarks and generalizations. 

To conclude the section, let us recall some relevant previous studies. 
Gaussian invariant measures of \cref{eq:eulero}
were originally introduced and studied in a series of works by Albeverio and others \cite{Albeverio1979,Albeverio1990}:
in fact, the idea of Gaussian and Poissonian invariant measures as special cases of the general,
independently scattered one, dates back to \cite{Albeverio2003}.
The works \cite{Benfatto1987,Grotto2020b,Grotto2020c,Grotto2020d} concern relations, in the form of scaling limits,
between Gaussian and Poissonian random fields preserved by Euler's dynamics.

\subsection{Notation}\label{ssec:notation}
From now on, all random elements are assumed to be defined on a suitable probability space 
$(\Omega,\F,\PP)$, with $\EE$ denoting expectation with respect to $\PP$. 
The letter $\B$ stands for the Borel $\sigma$-algebra of $\D$, whereas $dx$ is the Lebesgue measure. 
When convenient, we will also use the symbol $|A|$ to indicate the Lebesgue measure of a given $A\in\B$. 
We denote by $\S=\S(\D)=C_c^\infty(\D)$ the space of smooth functions with compact support in $\D$,
and by $\S'$ the dual space of distributions\footnote{
	To avoid confusion, \emph{distribution} will always refer to a linear continuous functional of smooth functions, and never to the law of a random variable.} 
on $\mathcal D$.

The symbol $\omega$ will always be used for vorticity, never for elements of the probability space $\Omega$
( playing a role exclusively in \cref{sec:randomfields}): in fact, 
we will not need to explicitly specify any dependence on the elements of $\Omega$,
in such a way that no ambiguity will arise.

%%%%%%%%%%%%%%%%%%%%%%%%%%%%%%%%%%%%%%%%%%%%%%%%%%%%%%%%%%%%%%%%%%%%%%%%%%%%%%%%%%%%%%%%%%%%%
\section{Independently Scattered Random Measures and Stochastic Integrals}\label{sec:randomfields}

We begin with a review of concepts from probability theory.
All results and facts of this section are standard: we refer to \cite{KW,Rajput1989,Rota1997,Szulga1991}
and, in particular, to \cite{PeccatiTaqqu10} (whose setting is closest to ours).   

\begin{definition}\label{def:rm}
	A \emph{random measure} on a measurable space $(\D,\B)$ is a real-valued random field $\set{M(A):A\in\B}$,
	indexed by $\B$ and satisfying the following property: for all sequences $A_1,A_2,\dots\in\B$ of disjoint Borel sets,
	\begin{equation*}
	M\pa{\bigcup_{n=1}^\infty A_n}=\sum_{n=1}^\infty M(A_n),
	\end{equation*}
	where the series on the right-hand side converges in probability.
\end{definition}

We now give an explicit definition of the random measures we consider in the present paper: 
as explained in \cref{ssec:characterize}, up to some additional integrability assumption 
they are actually the ones characterized by \cref{hyp:main}.

\smallskip

We assume that on the probability space $(\Omega,\F,\PP)$ the following two random measures are defined 
(see \cite[Chapter 5]{PeccatiTaqqu10}, as well as \cite{LastPenrose18, NouPec12} for further details).
\begin{itemize}
	\item A \emph{Gaussian white noise} on $\D$, that is, a centered Gaussian field $W$ defined on $\B$
	with covariance
	\begin{equation*}
	\expt{W(A)W(B)}=|A\cap B|, \quad A,B\in\B.
	\end{equation*}
	\item A \emph{ compensated} compound Poisson point process $P$ independent of $W$ given by
	\begin{equation*}
		P(A)=\int_A \int_{ \R} \gamma\,  \Nsc(dx,d\gamma), \quad A\in\B,
	\end{equation*}
	where $\Nsc$ is a compensated Poisson point process independent of $W$ on the product space $\D\times \R$, 
	whose control measure is the product of the Lebesgue measure on $\D$
	and of an atomless measure $\nu$ on $\R$ such that 
	$\int_\R |\gamma| d\nu(\gamma), \int_\R |\gamma|^2 d\nu(\gamma)<\infty$.		
\end{itemize}

\begin{remark}\label{rmk:poisson}
	The integrability properties imposed on $\nu$ imply that it is a $\sigma$-finite measure on $\R$,
	possibly giving infinite mass to neighborhoods of $0$. It is a standard fact (see \cite[Corollary 6.5]{LastPenrose18}) that samples of $P$
	can be identified (without changing probability space) with infinite sums of Dirac deltas
	$\sum_{i=1}^\infty \gamma_i \delta_{x_i}$, $x_i\in\D$, $\gamma_i\in\R$ with 
	$\sum_{i=1}^\infty |\gamma_i|=\int_\R |\gamma| d\nu(\gamma)<\infty$. Notice that $\nu$ being atomless implies in particular that $\nu(\set{0})=0$, 
	which makes this representation of samples unique, since $\gamma_i$ (almost) never take value $0$.
	Finally, assuming that $P$ is compensated implies $\int_\R \gamma d\nu(\gamma)=0$.
\end{remark}

For $a\in\R$, $q\in [0,\infty)$, we introduce the random measure
\begin{equation}\label{eq:levyito}
	M(A): =a|A|+\sqrt{q}W(A)+P(A), \quad A\in \mathcal{B},
\end{equation}
and write $N \sim [a,q,\nu]$ to indicate that another random measure $N$ has the same law of $M$, 
as defined in \cref{eq:levyito}.

\begin{remark}
	In applications, it is often natural to consider measures of the type of \cref{eq:levyito} 
	with $P$ a \emph{non-compensated} compound Poisson process. 
	However, it is immediately seen that the compensation can always be forced 
	into the definition of $M$ by suitably modifying the constant $a$.
\end{remark}

In order to specify a formulation of Euler's equation \cref{eq:eulero} that is meaningful
when the (suitably defined) random mapping $A\mapsto \int_A \omega_t dx$ has the law of $A\mapsto M(A)$,
it is convenient to regard the random measures introduced above as
random variables taking values in distribution spaces.
If $M\sim [a,q,\nu]$ and $f\in L^2(\D)$, we can set
\begin{equation} \label{e:f}
	I^1_M(f)=a\int_{\D} f dx+ \sqrt{q} W(f)+ P(f),
\end{equation}
where $I^1_W(f)$ is the It\^o integral of $f$ with respect to the white noise $W$ \cite{Nualart1995, NouPec12}, and
\begin{equation*}
	I^1_P(f)=\int_\D\int_\R \gamma f(x) \Nsc(dx,d\gamma)
\end{equation*}
is the Poissonian integral of $f$ \cite{LastPenrose18}. When $f$ is an indicator function of a Borel set $A$,
\cref{e:f} corresponds to \cref{eq:levyito}.

Although $I^1_M(f)$ is well-defined as a random variable for $f\in L^2(\D)$,
it turns out that restricting the set of integrands yields fixed samples
of $M$ as distributions, \emph{i.e.} generalized functions.

\begin{lemma}\label{cor:randomdistr}
	Let $M\sim [a,q,\nu]$ be defined on the probability space $(\Omega,\F,\PP)$.
	Then, $\PP$-almost surely, the map $\S(\D)\ni f\mapsto I^1_M(f)$ is a continuous linear functional;
	in other words, we can regard $M$ as a $\S'$-valued random variable.
	
	The support of the law $m$ of $M$ in $\S'(\D)$ is contained in a Polish subspace $\XX\subset \S'(\D)$,
	so that $(\XX,m)$ is a standard Borel probability space.
\end{lemma}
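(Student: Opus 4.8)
The plan is to establish two separate assertions: first, that for $\PP$-a.e.\ $\omega\in\Omega$ the linear map $f\mapsto I^1_M(f)$ is continuous on $\S(\D)$, hence defines an element of $\S'(\D)$; second, that one can exhibit a Polish (in fact Hilbertian) subspace $\XX\subset\S'(\D)$, which is Borel in $\S'(\D)$, carrying the law $m$ with full measure. The three summands in \cref{e:f} are handled separately. The deterministic term $a\int_\D f\,dx$ is trivially continuous on $\S(\D)$. For the Gaussian term $W$, the standard route is to use the fact that $W$, restricted to a suitable negative-order Sobolev scale, defines almost surely a distribution: concretely, fix an orthonormal basis $(e_k)_{k\ge1}$ of $L^2(\D)$ consisting of Dirichlet eigenfunctions of $-\Delta$ with eigenvalues $\lambda_k\uparrow\infty$, so that $W=\sum_k g_k e_k$ with $(g_k)$ i.i.d.\ standard Gaussians, and observe that $\sum_k \lambda_k^{-s}\EE[g_k^2]=\sum_k\lambda_k^{-s}<\infty$ for $s$ large enough (Weyl asymptotics $\lambda_k\sim ck$ give convergence for $s>1$). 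Hence $W\in H^{-s}(\D)$ a.s.\ for such $s$, and the pairing with $f\in\S(\D)\subset H^s_0(\D)$ is continuous. For the compound Poisson term $P$, I would use the representation recalled in \cref{rmk:poisson}: $P=\sum_{i\ge1}\gamma_i\delta_{x_i}$ with $\sum_i|\gamma_i|<\infty$ a.s.; then $|I^1_P(f)|\le\big(\sum_i|\gamma_i|\big)\sup_{\D}|f|$, so $P$ acts continuously on $\S(\D)$ (indeed it already lies in the space of finite signed measures, a fortiori in $H^{-s}(\D)$ for $s>1$ by Sobolev embedding $H^s_0\hookrightarrow C_0$ in dimension $2$).

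\textbf{Assembling the Polish subspace.} Combining the three estimates, for $s$ fixed large enough (any $s>1$ works, using $H^s_0(\D)\hookrightarrow C(\obar\D)$ in two dimensions) we get $M\in H^{-s}(\D)$ $\PP$-almost surely, and moreover $\EE\big[\norm{M}_{H^{-s}}^2\big]<\infty$ because each piece has a finite second moment: the Gaussian part contributes $q\sum_k\lambda_k^{-s}$, and the Poisson part contributes a constant times $\sum_k\lambda_k^{-s}\int_\R\gamma^2\,d\nu(\gamma)$ by the isometry formula for compensated Poisson integrals together with $\big|\int_\D f\,d(\sum\gamma_i\delta_{x_i})\big|^2$ being controlled in $L^2(\Omega)$ by $\norm f_\infty^2\,\EE[(\sum|\gamma_i|)^2]$ — one should just be a little careful here and instead expand $\norm{M}_{H^{-s}}^2=\sum_k\lambda_k^{-s}I^1_M(e_k)^2$ and take expectations term by term, using $\EE[I^1_M(e_k)^2]=a^2(\int e_k)^2+q+\int\gamma^2 d\nu$ uniformly in $k$. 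Thus $m$ is supported on $H^{-s}(\D)$. The space $\XX:=H^{-s}(\D)$ is a separable Hilbert space, hence Polish, and it embeds continuously (indeed as a Borel subset, being a continuously embedded separable Banach subspace of the Souslin space $\S'(\D)$ — standard descriptive-set-theory, e.g.\ via a Borel-isomorphism/Lusin–Souslin type argument) into $\S'(\D)$, so its Borel $\sigma$-field is the trace of that of $\S'$, and $(\XX,m)$ is a standard Borel probability space.

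\textbf{The main obstacle.} The only genuinely delicate point is the measurability bookkeeping: one must check that $M$, a priori defined as an $\S'$-valued random variable, actually takes values in $\XX=H^{-s}(\D)$ in a measurable way, i.e.\ that $\XX$ is a Borel subset of $\S'(\D)$ and that the $\XX$-valued map is Borel. This is where one invokes that $\S'(\D)$ is a Souslin (Lusin) space and that a continuous injective image of a Polish space into a Souslin space is Borel and is a Borel isomorphism onto its image; granting this, the $H^{-s}$-norm is a well-defined Borel function on a full-measure Borel subset of $\S'(\D)$, and everything fits together. The analytic estimates above (Weyl asymptotics, the Poisson $L^1$-representation, Sobolev embedding in dimension two) are routine once the choice of $s$ is pinned down; the descriptive-set-theoretic step, while also standard, is the part that deserves an explicit citation rather than a one-line dismissal. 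All of this relies only on \cref{cor:randomdistr}'s hypotheses and \cref{rmk:poisson}, both already available in the excerpt.
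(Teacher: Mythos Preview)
Your proposal is correct and follows essentially the same route as the paper: treat the deterministic and Poisson parts as signed Radon measures (via \cref{rmk:poisson}), place the Gaussian part in a negative Sobolev space using Weyl asymptotics for the Dirichlet Laplacian, and take $\XX$ to be that Sobolev space. The paper phrases the Gaussian step in the abstract language of Hilbert--Schmidt embeddings of the reproducing kernel Hilbert space and fixes $\XX=H^{-2}(\D)$, whereas you write out the equivalent eigenfunction-series computation and allow any $s>1$; you are also more explicit about the descriptive-set-theoretic measurability point, which the paper treats in one line.
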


In the following, $H^k_0(\D)$ is the usual Sobolev space of functions vanishing at the boundary of $\D$,
that is the closure of $\S(\D)$ under the norm $\norm{\cdot}_{H^k_0(\D)}=\norm{(1+\Delta)^{k/2}\cdot }_{L^2(\D)}$
with $\Delta$ the Dirichlet Laplacian as above.

\begin{proof}
	The constant part $a dx$ and the Poissonian one are both elements of signed Radon measures on $\D$, $\M(\D)$
	(\emph{cf.} \cref{rmk:poisson}).
	We can thus focus on the Gaussian part.
	The following argument is standard, so we only sketch it: we refer to \cite[Chapter 3]{Samorodnitsky1994}
	for a detailed discussion of representations and orthogonal expansions of Gaussian random fields.
	
	It is a standard fact that the It\^o integral $L^2(\D)\ni f\mapsto I^1_W(f)\in L^2(\PP)$ 
	defines an isometry of Hilbert spaces.
	It is then a general fact (see \cite{DaPrato1992}) that a Gaussian measure (in our case $W$) 
	whose reproducing kernel Hilbert space $H$ is Hilbert-Schmidt embedded in another Hilbert space $E$, 
	can be identified as a Gaussian random element of $E$. 
	We then conclude observing that the embedding $L^2(\D)\hookrightarrow H^2_0(\D)'=H^{-2}(\D)$ is Hilbert-Schmidt:
	this is easily checked by considering a Fourier basis diagonalizing the Dirichlet Laplacian,
	recalling that $(1+\Delta)^2:H^2_0(\D)\to L^2(\D)$ is an isometry of Hilbert spaces
	and Weyl's law in dimension $2$.
	
	The proof is concluded since $H^{-2}(\D)$ is a Polish space and a topological subspace of $\S'(\D)$,
	and since $\M(\D)\subset H^{-2}(\D)$, again as topological vector spaces.
\end{proof}

In fact, regularity of $W$ discussed above might be improved 
with a finer Sobolev embedding (see \cite{Flandoli2018,Grotto2020d}).
For our purposes, it suffices to identify $(\XX,m)$
with a standard Borel probability space contained (topologically) in $\S'(\D)$,
so to have both a well-behaved underlying probability space and a natural way to couple samples
of $M$ and smooth functions.

\subsection{Double Stochastic Integrals and Moments}\label{ssec:double}
Having identified the random measure $M\sim [a,q,\nu]$ with a random distribution on $\D$,
one can consider $M\otimes M\in \S'(\D^2)$, its tensor product defined sample-by-sample.
In particular, for $h\in \S (\D^2)$ we can consider the duality coupling $\brak{h,M\otimes M}$.
By symmetry, coupling of $M\otimes M$ with $h(x,y)$ is the same as with $\tilde h(x,y)=(h(x,y)+h(y,x))/2$,
so one can reduce the discussion to symmetric functions.

From the point of view of Probability Theory, this is a rather unnatural definition for
double integrals with respect to $M$, and one typically prefers to use double stochastic integrals, 
which are classically defined as follows (see \cite[Chapter 5]{PeccatiTaqqu10} for a full account).

Let $M\sim [a,q,\nu]$ be defined on the probability space $(\Omega,\F,\PP)$. For any pair of disjoint sets $A, B\in \mathcal B$, we consider the mapping $(x,y)\mapsto h(x,y) = {\bf 1}_A(x) {\bf 1}_B(y) $, $x,y\in \D^2$, and define the \emph{ double stochastic integral of $h$ with respect to $M$} as	
\begin{equation*}
I^2_M(h) := 2 M(A)M(B).
\end{equation*}
One canonically extends by linearity the previous definition of $I^2_M(h)$ to functions $h$ that are generic finite linear combinations of simple functions as above. 
For any such $h$ it is easy to verify that, writing explicitly $M = adx +\sqrt{q} W+P$,
\begin{gather*}
	\label{eq:intsymm}
	I^2_M(h) = I^2_M(\tilde h)\\
	\label{eq:intexpanded}
	I^2_M(h)=a^2\int_{\D^2}\tilde hd^2x+2a I^1_M\pa{\int_{\D}\tilde h(x,\cdot)dx}+q I^2_W(\tilde h)+2\sqrt q I^1_WI^1_P(\tilde h)+I^2_P(\tilde h),
\end{gather*}
where:
\begin{itemize}
\item[--] $d^2 x$ stands for the product Lebesgue measure on $\D^2$;
\item[--] $I^2_W$ and $I^2_P$ are, respectively, double Wiener-It\^o integrals with respect to $W$ and $P$;
\item[--] the (slightly clumsy, but convenient for our further discussion) symbol $I^1_WI^1_P$ denotes a \emph{ mixed iterated} stochastic integral of order $2$, with respect to $W$ and $P$. Note that $I^1_WI^1_P(\tilde h) = I^1_PI^1_W(\tilde h)$ because of symmetry. See e.g. \cite{NS} for a discussion of several distinguished properties of mixed iterated stochastic integrals.
\end{itemize}

The following result is standard, and the proof is therefore omitted.

\begin{lemma}
	The mapping $h\mapsto I_M^2(h)$ defined above extends by density to a 
	continuous mapping $I^2_M:L^2(\D)\to L^2(\Omega,\F,\PP)$ between Hilbert spaces, 
	such that \eqref{eq:intsymm} and \eqref{eq:intexpanded} are preserved. 
\end{lemma}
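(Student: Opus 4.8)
The plan is to derive an a priori bound of the form $\norm{I^2_M(h)}_{L^2(\Omega,\F,\PP)}\le C\,\norm{h}_{L^2(\D^2)}$, with $C=C(a,q,\nu,|\D|)$, valid for every $h$ in the linear span $\mathcal{S}_0$ of products $\one_A\otimes\one_B$ with $A,B\in\B$ disjoint, and then to invoke the standard density argument (bounded linear transformation theorem) to obtain the continuous extension $I^2_M\colon L^2(\D^2)\to L^2(\Omega,\F,\PP)$. Since the symmetrization map $h\mapsto\tilde h$ is an orthogonal projection of $L^2(\D^2)$, hence a contraction, and since $I^2_M(h)=I^2_M(\tilde h)$ on $\mathcal{S}_0$, it suffices to bound each of the five summands appearing on the right-hand side of \eqref{eq:intexpanded} by a constant times $\norm{\tilde h}_{L^2(\D^2)}$ and then add up via the triangle inequality in $L^2(\Omega)$; no mutual orthogonality of the distinct chaoses is needed for a continuity statement, although it does hold and would yield the exact value of the second moment.

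First I would treat the deterministic summand: Cauchy--Schwarz gives $\abs{a^2\int_{\D^2}\tilde h\,d^2x}\le a^2|\D|\,\norm{\tilde h}_{L^2(\D^2)}$. For the first-order summand $2a\,I^1_M(\int_{\D}\tilde h(x,\cdot)dx)$, Jensen's inequality yields $\norm{\int_{\D}\tilde h(x,\cdot)dx}_{L^2(\D)}\le|\D|^{1/2}\norm{\tilde h}_{L^2(\D^2)}$, while $I^1_M\colon L^2(\D)\to L^2(\Omega)$ is bounded — with operator norm $(a^2|\D|+q+\int_\R\gamma^2\,d\nu)^{1/2}$, directly from \eqref{e:f} and the isometry properties of $I^1_W$ and $I^1_P$. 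The three genuinely second-order summands $q\,I^2_W(\tilde h)$, $2\sqrt q\,I^1_WI^1_P(\tilde h)$ and $I^2_P(\tilde h)$ are controlled by the classical $L^2$-isometry relations for double Wiener--It\^o, double Poisson, and mixed iterated integrals (see \cite[Chapter 5]{PeccatiTaqqu10}, and \cite{NS} for the mixed case), each of which bounds the relevant second moment by a constant — depending only on $q$ and on the finite quantity $\int_\R\gamma^2\,d\nu$ — times $\norm{\tilde h}_{L^2(\D^2)}^2$. Summing, the triangle inequality in $L^2(\Omega,\F,\PP)$ delivers the desired bound, and the continuous extension follows.

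Finally, \eqref{eq:intsymm} and \eqref{eq:intexpanded} persist under the extension because both sides of each identity are continuous functions of $h\in L^2(\D^2)$ — for \eqref{eq:intexpanded} this is precisely the term-by-term continuity established above, and for \eqref{eq:intsymm} one additionally uses continuity of $h\mapsto\tilde h$ — and they already agree on the dense subspace $\mathcal{S}_0$. The one point deserving a word of care is the density of $\mathcal{S}_0$ in $L^2(\D^2)$: this holds because the diagonal $\set{x=y}$ is $d^2x$-negligible, so that indicators of products of disjoint sets span a dense subspace. Since everything else reduces to the standard multiple-integral estimates recalled above, I do not expect any real obstacle here; the only mild bookkeeping is keeping track of the combinatorial constants attached to each isometry formula, which is irrelevant for the continuity claim.
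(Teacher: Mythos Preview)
Your proposal is correct and spells out exactly the standard density-plus-$L^2$-bound argument that the paper merely alludes to (the paper omits the proof, calling the result ``standard'' and referring to \cite[Proposition 3.1]{Szulga1991} and \cite{PeccatiTaqqu10}). One tiny quibble: the quantity $(a^2|\D|+q+\int_\R\gamma^2\,d\nu)^{1/2}$ is an upper bound for the operator norm of $I^1_M$ rather than its exact value, but this is irrelevant for the continuity claim.
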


We refer to \cite[Proposition 3.1]{Szulga1991} further details, as well as
to \cite{Nualart1995, NouPec12,PeccatiTaqqu10} for a discussion of the purely Gaussian setting, 
and to \cite{KW, LastPenrose18} for the Poissonian one.

Comparing definitions, one easily obtains the following relation between the
two notions of double integrals discussed above: for $h\in \S(\D^2)$,
\begin{equation}\label{eq:relation}
	\brak{h,M\otimes M}=I^2_M(h)+\int_{\D}h(x,x)dx+\int_\D\int_\R \gamma^2 \tilde h(x,x) \Nsc(dx,d\gamma).
\end{equation}

We eventually observe that moments of stochastic integrals can be evaluated by means of usual formulae (see again \cite{PeccatiTaqqu10}):
for $f\in L^2(\D)$ and $h\in L^2(\D^2)$ it holds
\begin{gather}\nonumber
\expt{I^1_W(f)}=\expt{I^1_P(f)}=\expt{I^2_W(h)}=\expt{I^2_P(h)}=0,\\
\expt{I^1_W(f)^2}=\norm{f}_{L^2(\D)}^2,\quad \expt{I^1_P(f)^2}=\int_\R x^2 d\nu(x) \norm{f}_{L^2(\D)}^2,\\
\expt{I^2_W(h)^2}=2 \norm{h}_{L^2(\D)}^2,\quad
\expt{I^2_P(h)^2}=2 \pa{\int_\R x^2 d\nu(x)}^2 \norm{h}_{L^2(\D)}^2
\end{gather}
(recall that we are assuming that $\Nsc$ is compensated).

\subsection{Characterizing Random Measures}\label{ssec:characterize}

%\todo[inline]{il risultato di Rajput e Rosinski dice che le misure infinitamente divisibili e completamente random
%hanno $\nu$ senza atomi (vedi loro def. di misura di levy a pag 5 del loro paper). 
%Sul $\nu(0)=0$ ho aggiunto un commento sopra, e anche sull'ipotesi $\sigma$-finita.}

Before moving on to Euler's dynamics, let us argue how the random measures we are considering are special cases of the objects satisfying \cref{def:rm} and \cref{hyp:main}.

A random measure is called \emph{infinitely divisible} if all random variables $M(A)$, $A\in\B$,
are infinitely divisible. This condition is clearly implied by \cref{hyp:main} b)
and these random measures are characterized in \cite[Proposition 2.4]{Rajput1989}
by means of their characteristic function,
generalizing the L\'evy-Khintchine representation for real-valued infinitely divisible random variables.

Random measures satisfying \cref{hyp:main} a) are usually referred to
as \emph{independently scattered} or \emph{completely random}.
Assuming this condition and the requirement that all variables $M(A)$ have finite second moments,
the aforementioned result of \cite{Rajput1989} on infinitely divisible random measures
specializes to the following one: we refer to \cite[Section 4]{Peccati2008} for a thorough discussion.

\begin{proposition}\label{prop:levykhintchine}
	Let $M$ be a random measure satisfying \cref{hyp:main}.
	Assume moreover that $\expt{M(A)^2}<\infty$ for all $A\in\B$.
	Then the characteristic function of $M$ has the L\'evy-Khintchine form
	\begin{equation}\label{e:lk}
	\expt{e^{it M(A)}}=
	\exp\pa{i t a |A|-\frac{t^2}{2}q |A|+|A|\int_\R\pa{e^{itx}-1 -itx }d\nu(x)},\quad t\in \mathbb{R},
	\end{equation}
	where $a\in\R$, $q\in [0,\infty)$ and $\nu$ is an atomless measure on $\R$
	such that $\int_\R  |x| \vee |x|^2 d\nu(x)<\infty$.
	
	Conversely, given such a \emph{triple} $a,q,\nu$,
	there exist a random measure satisfying \cref{hyp:main}
	with the above characteristic function.
\end{proposition}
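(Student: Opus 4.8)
\textbf{Proof plan for \cref{prop:levykhintchine}.}

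The plan is to reduce the statement to the general classification of independently scattered infinitely divisible random measures already recalled before the proposition (\cite[Proposition 2.4]{Rajput1989}), and then exploit \cref{hyp:main}~b) together with the finite-variance hypothesis to pin down the specific Lévy--Khintchine form. The first observation is that \cref{hyp:main}~b) forces the law of $M(A)$ to depend only on $|A|$; since any $A$ of positive measure can be split into $n$ disjoint pieces of equal measure, \cref{hyp:main}~a) then shows $M(A)$ is infinitely divisible for every $A\in\B$. Hence the cited result of Rajput--Rosinski applies and gives, for each fixed $A$, a Lévy--Khintchine exponent of the form $it\,\alpha(A)-\tfrac{t^2}{2}\sigma(A)+\int_\R(e^{itx}-1-itx\one_{|x|\le1})\,\Pi_A(dx)$, where $\alpha$ is a signed measure, $\sigma$ a nonnegative measure, and $A\mapsto\Pi_A$ a measure-valued map, all absolutely continuous with respect to a common control measure. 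The key step is then to use translation/scaling invariance (\cref{hyp:main}~b)) again: because the exponent is additive in $A$ under disjoint unions and depends only on $|A|$, standard measure-theoretic arguments (additivity plus monotone continuity, i.e. a Cauchy-type functional equation on $[0,\infty)$) force $\alpha(A)=a|A|$, $\sigma(A)=q|A|$ and $\Pi_A=|A|\,\nu$ for fixed constants $a\in\R$, $q\ge0$ and a fixed Lévy measure $\nu$ on $\R$.

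Next I would incorporate the finite-second-moment hypothesis. The condition $\expt{M(A)^2}<\infty$ is equivalent, via differentiating the characteristic function twice (or by the standard criterion for infinitely divisible laws), to $\int_{|x|>1}x^2\,d\nu(x)<\infty$, and together with the intrinsic property $\int_{|x|\le1}x^2\,d\nu(x)<\infty$ of any Lévy measure this yields $\int_\R|x|\vee|x|^2\,d\nu(x)<\infty$; in particular $\int_\R|x|\,d\nu(x)<\infty$, so the truncation function $\one_{|x|\le1}$ can be dropped and absorbed into a redefinition of $a$, producing the centered form $e^{itx}-1-itx$ in \eqref{e:lk}. Atomlessness of $\nu$ I would deduce from the earlier stated fact that \cref{hyp:main}~b) already rules out atoms in the control data: an atom of mass $c>0$ at some $x_0\ne0$ in $\nu$ would, for a set $A$ with $|A|$ small, contribute a nonnegligible discrete component whose presence is incompatible with the required shift-invariance under arbitrary measurable partitions of $\D$ — alternatively this is simply part of the cited classification once \cref{hyp:main}~b) is imposed, so I would quote \cite[Section 4]{Peccati2008} here rather than reprove it.

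For the converse I would exhibit the random measure explicitly: given $(a,q,\nu)$ with the stated integrability, define $M$ by \cref{eq:levyito}, i.e. $M(A)=a|A|+\sqrt q\,W(A)+P(A)$ with $W$ a Gaussian white noise and $P$ the compensated compound Poisson measure driven by $\nu$ as constructed in \cref{sec:randomfields}. That $M$ satisfies \cref{def:rm} and \cref{hyp:main} is immediate from the independence and stationarity properties of $W$ and of the Poisson point process $\Nsc$ with control measure $dx\otimes\nu$, and a direct computation of $\expt{e^{itM(A)}}$ using independence of $W$ and $P$, the Gaussian characteristic function, and the exponential (Campbell) formula for Poisson integrals yields exactly \eqref{e:lk}. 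The finite second moment follows from the moment formulas listed at the end of \cref{ssec:double}.

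The main obstacle is the middle step: rigorously upgrading the Rajput--Rosinski parametrization from ``there exist control data $(\alpha,\sigma,\{\Pi_A\})$ absolutely continuous w.r.t.\ a $\sigma$-finite measure'' to ``these data are exact multiples of Lebesgue measure'', i.e.\ showing that \cref{hyp:main}~b) collapses all the freedom down to a single scalar triple. This requires a careful argument that the Radon--Nikodym densities are constant, combining additivity over disjoint sets, the dependence-only-on-$|A|$ property, and a continuity/regularity input (e.g.\ via the Steinhaus-type lemma or Lebesgue differentiation) to solve the resulting Cauchy functional equation. Everything else is either a direct citation or a routine computation, so I would present this reduction carefully and keep the remaining steps brief.
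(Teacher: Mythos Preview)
Your plan is correct and follows exactly the route the paper indicates: the paper does not give a self-contained proof of this proposition at all, but simply presents it as the specialization of \cite[Proposition~2.4]{Rajput1989} under \cref{hyp:main} and the second-moment assumption, referring to \cite[Section~4]{Peccati2008} for details and to \cite{pedersen2003levy,Adler1983,Sato2004} for the converse decomposition. Your sketch fills in precisely these steps --- infinite divisibility from \cref{hyp:main}~b), the Rajput--Rosi\'nski parametrization, reduction of the control data to multiples of Lebesgue measure via \cref{hyp:main}~b), the moment condition forcing $\int |x|\vee|x|^2\,d\nu<\infty$, and the explicit construction \eqref{eq:levyito} for the converse --- so there is nothing to add beyond noting that the paper is content to cite rather than argue.
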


It is easily seen that random measures of the type \eqref{eq:levyito} 
correspond to those characterized by \cref{e:lk}. 
Moreover, given $M$ as in \cref{prop:levykhintchine}, 
it is possible to define Gaussian and compound Poisson processes $W,P$
as above such that $M=a+\sqrt q W+P$ \emph{without changing probability space},
and such representation is unique:
this is a particular case of \cite[Theorem 4.5]{pedersen2003levy},
and we also refer to \cite{Adler1983,Sato2004} for details on the topic.

%%%%%%%%%%%%%%%%%%%%%%%%%%%%%%%%%%%%%%%%%%%%%%%%%%%%%%%%%%%%%%%%%%%%%%%%%%%%%%
\section{Weak Solutions of 2-dimensional Euler Equations}\label{sec:weaksol}

Let us first consider a smooth solution $\omega(t,x)$ of \cref{eq:eulero}.
The weak (\emph{i.e.} integral) formulation of \cref{eq:eulero} reads,
for all test functions $\varphi\in\S(\D)$,
\begin{equation*}
	\int_{\D}\varphi(x) \omega_t(x) dx-\int_{\D}\varphi(x) \omega_0(x) dx
	=\int_0^t \int_{\D^2} K(x,y)\cdot \nabla\omega_s(x)\omega_s(y)dxdy.
\end{equation*}
Since $G(x,y)$ is a symmetric function of its variables, $K(x,y)$ is in turn antisymmetric:
the latter expression can thus be rewritten as
\begin{gather}\label{eq:delort}
	\int_{\D}\varphi(x) \omega_t(x) dx-\int_{\D}\varphi(x) \omega_0(x) dx
	=\int_0^t \int_{\D^2}  H_\varphi(x,y)\omega_s(x)\omega_s(y)dxdy,\\ \nonumber
	H_\varphi(x,y)=\frac12 \pa{\nabla\varphi(x)-\nabla\varphi(y)}\cdot K(x,y).
\end{gather}
The fundamental advantage of this second, symmetrized form, is that instead of the singular kernel $K$
it involves the new symmetric function $H_\varphi$ (depending on the test function), which is bounded and smooth outside
the diagonal set $\set{x,y\in\D:x=y}$, on which it has a jump discontinuity, regardless of the choice of $\varphi$.
In fact, one can provide an explicit bound:
\begin{equation}\label{eq:H}
	\exists C>0: \,\forall \varphi\in\S(\D), \quad \norm{H_\varphi}_\infty\leq C \norm{\varphi}_{C^2(\D)}.
\end{equation}
This is readily seen by Taylor expanding $\nabla\varphi$, keeping in mind that $|K(x,y)|=O(|x-y|^{-1})$ for $|x-y|\to 0$.
Clearly, formulation \cref{eq:delort} is equivalent to \cref{eq:eulero} for smooth solutions.

The double space integral of \cref{eq:delort} can be defined for $\omega(t,x)$ whose space regularity is
lower than the well-posedness regime $L^\infty(\D)$. We refer to \cite{Delort1991,Schochet1995} for the original
application of this trick to Euler flows with low space regularity, and a more thorough discussion of the
symmetrization argument outlined above.

Aiming to define solutions starting from a \emph{random} initial datum whose law is a random field $M\sim [0,q,\nu]$,
considering \cref{eq:delort} sample-by-sample is still insufficient.
This is essentially due to the Gaussian part: as first observed in \cite{Flandoli2018}, 
$W\otimes W\in\S'(\D^2)$ can not be coupled with discontinuous functions, thus 
$H_\varphi$ is not regular enough to be coupled sample-by-sample with $W\otimes W$,
due to the already mentioned discontinuity on the diagonal.

We thus need to resort to stochastic integrals in order to define solutions of \cref{eq:delort}
for which $\omega(t,\cdot)$ is a random field with the law of $M\sim [0,q,\nu]$.
The idea is simple: we interpret all space integrals of \cref{eq:delort}
as stochastic integrals with respect to the random vorticity $\omega_t$.
We refer once again to \cite{Flandoli2018,Grotto2020d} for a discussion on why this gives a 
sensible definition of weak solution, in particular obtaining this kind of weak solutions
as limits of classical ones.

\subsection{Weak Solutions and Stochastic Integrals}\label{ssec:weaksol}

The following definitions are analogous to the ones given in the aforementioned \cite{Flandoli2018,Grotto2020d}
in the purely Gaussian or Poissonian case: in fact they are a generalization.
We write $(\XX,m)$ denoting the law of the restriction of $M\sim [a,q,\nu]$ to $\S(\D)$ on its support $\XX\subset \S'(\D)$.

\begin{definition}\label{def:sol}
	Let $I\subset \R$ be an interval and fix a probability space $(\Omega,\F,\PP)$.
	Consider a $\S'(\D)$-valued stochastic processes $(\omega_t)_{t\in I}$ satisfying
	\begin{enumerate}
		\item the fixed-time marginals have law $\omega_t\sim [a,q,\nu]$;
		\item for any $\varphi\in\S(\D)$, $\PP$-almost surely, the maps
		\begin{equation*}
			I\ni t\mapsto I^1_{\omega_t}(\varphi), \quad I\ni t\mapsto I^2_{\omega_t}(H_\varphi),
		\end{equation*}
		are measurable.
	\end{enumerate}
	We call $(\omega_t)_{t\in I}$ a weak solution of Euler's equations if for all $\varphi\in\S(\D)$, $\PP$-almost surely, for all $t,s\in I$,
	\begin{equation}\label{eq:eulerweak}
	I^1_{\omega_t}(\varphi)-I^1_{\omega_s}(\varphi)=\int_s^t I^2_{\omega_r}(H_\varphi) dr.
	\end{equation}
\end{definition}

Although we use the term ``stochastic process'', solutions are actually deterministic in nature,
randomness coming only from an initial datum distributed according to the invariant measure $M$.
No external noise is involved, 
and the whole forthcoming discussion will concern transformations of the standard Borel probability space
$(\XX,m)$, acting both as probability and phase space (in the spirit of Statistical Mechanics).

\begin{remark}\label{rmk:integrability}
	\cref{def:sol} actually implies some more properties of a weak solution $(\omega_t)_{t\in I}$.
	For instance, the processes of item (2) must also be, $\PP$-almost surely, locally square-integrable in time:
	for a fixed $\varphi\in\S$,
	\begin{equation*}
		\expt{\int_0^T I^1_{\omega_t}(\varphi)^2dt}=\int_0^T \expt{I^1_{\omega_t}(\varphi)^2}dt\leq C T \norm{\varphi}^2_{L^2(\D)},
	\end{equation*}
	with $C>0$ depending only on $q,\nu$, and the analogue for $I^2_{\omega_t}(H_\varphi)$.
	Swapping integrals follows from the Fubini-Tonelli theorem.
\end{remark}

\begin{remark}
	Assume that $M$ is purely Gaussian, \emph{i.e.} a multiple of $W$.
	The stochastic integral $I^1_{\omega_t}(\varphi)$ appearing on the left-hand side of \cref{eq:relation} 
	is then an element of the first Wiener chaos of the Gaussian space associated to $\omega_t\sim \sqrt q W$.
	On the right-hand side, however, it appears the double stochastic integral $I^2_{\omega_r}(H_\varphi)$, 
	belonging to the second chaos of the Gaussian space associated to $\omega_r\sim \sqrt q W$.
	
	If $\omega_t$ was a Gaussian process,
	that is its marginal distributions at different times are jointly Gaussian,
	this would make \cref{eq:eulerweak} an equation between non-trivial elements of different chaoses.
	It is thus important to notice that we only prescribed the law of $\omega_t$ at a single fixed time,
	so there is no contradiction.
	In fact, this reasoning implies that a solution $(\omega_t)_{t\in I}$ with marginals $\omega_t\sim [0,q,0]$
	is \emph{not} a Gaussian process.
\end{remark}

In the light of \cref{def:sol}, it is natural to consider the notion of \emph{solution flow}
for Euler's equations on $(\XX,m)$.

\begin{definition}\label{def:flow}
	A flow of weak solutions of Euler's equations in the sense of \cref{def:sol} is
	a one-parameter group $\T_t:(\XX,m)\to(\XX,m)$ of almost surely invertible, measurable, measure-preserving transformations
	of the probability space $(\XX,m)$ such that the stochastic process
	\begin{equation*}
	\omega_t=\T_t \omega_0, \quad \omega_0=M,
	\end{equation*}
	(defined on the probability space $(\XX,m)$) is a weak solution of Euler's equation in the sense of \cref{def:sol}.
\end{definition}

We already remarked in the Introduction that the existence of a flow in this sense remains an open question.
As anticipated, we now change our point of view in favor of observables of $(\XX,m)$,
rather than looking at samples of $\omega_t$.

\subsection{The Koopman-Von Neumann Point of View}\label{sec:kl}

It is a general fact, known as Koopman's Lemma,
that measure preserving flows give rise to groups of unitary operators on the $L^2$ space relative to the invariant measure.
In fact, there is a one-to-one correspondence between groups of unitaries and measure-preserving flows,
which is made precise in the following:

\begin{proposition}\label{prop:koopman}
	Let $(X,\F,\mu)$ be a standard Borel probability space,
	\emph{i.e.} $X$ is a Polish space and $\F$ the associated Borel $\sigma$-algebra.
	Consider maps 
	\begin{equation*}
		\forall t\in\R, \quad T_t:X\rightarrow X,\quad  U_t f=f\circ T_t, \quad f\in L^0(X,\F,\mu).
	\end{equation*}
	The following are equivalent:
	\begin{itemize}
		\item $T_t$ is a group of $\mu$-almost surely invertible, measurable, measure preserving maps,
		and $t\to T_t$ is weakly measurable:
		\begin{equation*}
			\forall A,B\in\F,\qquad t\mapsto \mu(T_t^{-1}A\cap B)\text{ is measurable;}
		\end{equation*}
		\item $U_t$ defines a strongly continuous group of unitary operators on $L^2(X,\F,\mu)$,
		such that for all $t\in\R$
		\begin{equation*}
			U_t 1=1,\qquad \forall f\geq 0,\, U_t f\geq 0.
		\end{equation*}
	\end{itemize}
\end{proposition}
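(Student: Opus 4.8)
The plan is to prove the equivalence between the two statements by treating the two implications separately, and in each direction splitting the argument into (i) the algebraic/measure-theoretic content and (ii) the functional-analytic content (unitarity, strong continuity versus measurability).

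\medskip

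\textbf{From flows to unitary groups.} Suppose $T_t$ is a group of $\mu$-almost surely invertible, measurable, measure-preserving maps with $t\mapsto \mu(T_t^{-1}A\cap B)$ measurable. First I would check that $U_t f = f\circ T_t$ is well defined on equivalence classes: since $T_t$ preserves $\mu$, $f=g$ $\mu$-a.e. implies $f\circ T_t = g\circ T_t$ $\mu$-a.e., and the change-of-variables formula $\int |f\circ T_t|^2\,d\mu = \int |f|^2\,d\mu$ shows $U_t$ maps $L^2$ isometrically into $L^2$. Surjectivity (hence unitarity) follows because $T_t$ is a.s. invertible with $T_t^{-1}=T_{-t}$ $\mu$-a.e., so $U_{-t}$ is a two-sided inverse. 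The group law $U_{t+s}=U_tU_s$ is immediate from $T_{t+s}=T_t\circ T_s$ $\mu$-a.e. The properties $U_t 1 = 1$ and positivity $f\ge 0 \Rightarrow U_t f\ge 0$ are obvious from composition. The one genuinely nontrivial point is \emph{strong continuity}: here I would use that weak measurability of $t\mapsto U_t$ as an operator-valued map — which follows from the stated measurability of $t\mapsto\mu(T_t^{-1}A\cap B)=\langle U_t \one_A,\one_B\rangle$ together with a density/linearity/approximation argument to pass from indicators to general $L^2$ functions — implies, for a one-parameter \emph{group} of unitaries, weak continuity, and for unitaries weak continuity upgrades to strong continuity because $\|U_t f - f\|^2 = 2\|f\|^2 - 2\re\langle U_t f, f\rangle$. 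This is a standard fact (von Neumann); I would cite it rather than reprove it.

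\medskip

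\textbf{From unitary groups to flows.} Conversely, suppose $U_t$ is a strongly continuous unitary group with $U_t 1 = 1$ and $U_t f\ge 0$ whenever $f\ge 0$. The heart of the matter is to produce the maps $T_t$, and this is where a representation theorem for positive unitary operators on $L^2$ of a standard Borel space is needed: a unitary $U$ on $L^2(X,\mu)$ that is a Markov operator both ways (positivity-preserving, $U1=1$, $U^{-1}1=1$) is necessarily of the form $Uf = f\circ T$ for some $\mu$-a.e. invertible measure-preserving $T$ — this is essentially the Banach–Lamperti / von Neumann theorem identifying lattice isomorphisms of $L^p$ spaces. Applying this pointwise in $t$ yields measure-preserving, a.s. invertible $T_t$ with $U_t f = f\circ T_t$. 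The group property of $T_t$ (up to $\mu$-null sets) is then forced by the group property of $U_t$ and the essential uniqueness of the representing map. Finally, weak measurability of $t\mapsto T_t$ comes for free: $\mu(T_t^{-1}A\cap B) = \int \one_A(T_t x)\one_B(x)\,d\mu(x) = \langle U_t\one_A,\one_B\rangle$, which is continuous (hence measurable) in $t$ by strong continuity of $U_t$.

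\medskip

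\textbf{Main obstacle.} The routine parts are the change-of-variables identities and the group-law bookkeeping modulo null sets. The substantive ingredients — which I expect the authors to invoke as known theorems — are (a) the upgrade from weak measurability to strong continuity for one-parameter unitary groups, and (b) the structure theorem characterizing doubly-Markovian unitaries on $L^2$ of a standard Borel space as Koopman operators of measure-preserving transformations. Of these, (b) is the real crux, since it is what actually reconstructs the point transformation from the operator; the standard-Borel hypothesis on $(X,\F,\mu)$ is exactly what makes this reconstruction possible (it fails for general measure spaces), which is presumably why it is stated. I would structure the proof so that the bulk of the text is the forward direction, and the converse is handled by citing (b) and checking that its output has all the required properties.
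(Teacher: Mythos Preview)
Your sketch is sound and identifies the right two nontrivial ingredients: von Neumann's theorem that a weakly measurable one-parameter unitary group on a separable Hilbert space is strongly continuous, and the structure theorem (Banach--Lamperti / Goodrich--Gustafson--Misra) that a positivity- and unit-preserving unitary on $L^2$ of a standard Borel probability space is a Koopman operator. The forward direction as you wrote it is routine and correct; the converse is, as you say, precisely the content of the cited reconstruction theorem, and the standard-Borel hypothesis is indeed what makes it work.

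That said, you should know that the paper does not give a proof of this proposition at all: immediately after the statement it writes ``We refer to \cite{Goodrich1980} for the proof.'' So there is nothing to compare against in the paper itself; your proposal is in fact a (correct) expansion of what that reference contains, and is more than what the authors supply.
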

\noindent
We refer to \cite{Goodrich1980} for the proof.
In the latter, and from now on, $L^p$ spaces are meant to be composed of 
\emph{complex-valued} functions.

Since $(\XX,m)$ is a standard Borel probability space thanks to \cref{cor:randomdistr},
a solution flow in the sense of \cref{def:flow} would thus correspond to a 
group of unitary operators on $L^2(\XX,m)$.
The crucial feature we care about is Euler's dynamics as prescribed in \cref{def:sol},
and in the operator setting this information is encoded in the infinitesimal generator.

Consider the class of \emph{cylinder observables}
\begin{equation*}
\cyl(M)=\set{F(M)=f(I^1_M(\varphi_1),\dots, I^1_M(\varphi_k)), \varphi_1,\dots,\varphi_n\in\S(\D), f\in C^\infty_b(\R^n,\C)}.
\end{equation*}
The latter expression for $F(M)$ clearly defines a measurable function of class $L^\infty(\XX,m)$.
\begin{lemma}\label{lem:cylinder}
	Let $M\sim [a,q,\nu]$. The linear subspace $\cyl(M)\subset L^2(\XX,m)$ is dense.
	The linear operator
	\begin{equation}\label{eq:generator}
	\A F(M)=\sum_{k=1}^{n} \de_k f(I^1_M(\varphi_1),\dots, I^1_M(\varphi_k)) I^2_M(H_{\varphi_k}), 
	\end{equation}
	defined on $F\in\cyl(M)$ takes value in $L^2(\XX,m)$.
	
	If $\T_t$ is a solution flow in the sense of \cref{def:flow}, the infinitesimal generator
	of the strongly continuous group of Koopman unitary operators $\U_t F=F\circ \T_t$, $F\in L^2(\XX,m)$,
	takes the form \eqref{eq:generator} when restricted to $\cyl(M)$.
\end{lemma}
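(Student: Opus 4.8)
The plan is to establish the three assertions of \cref{lem:cylinder} in sequence, treating density, the $L^2$-mapping property of $\A$, and the identification of the generator as logically separate tasks.

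\textbf{Density of $\cyl(M)$ in $L^2(\XX,m)$.} First I would recall that $\XX\subset\S'(\D)$ carries the $\sigma$-field generated by $M$, which by construction is the $\sigma$-field generated by the random variables $\{I^1_M(\varphi):\varphi\in\S(\D)\}$; indeed a countable separating family of test functions generates it, since $\S(\D)$ is separable and $f\mapsto I^1_M(f)$ is continuous in probability. Hence the bounded functions measurable with respect to finitely many $I^1_M(\varphi_j)$ are dense in $L^2(\XX,m)$. To pass from bounded measurable functions of a finite vector $(I^1_M(\varphi_1),\dots,I^1_M(\varphi_n))$ to smooth bounded ones, I would invoke a standard mollification/truncation argument: any bounded Borel $g:\R^n\to\C$ can be approximated in $L^2$ of the image law by elements of $C^\infty_b(\R^n,\C)$ (convolve with a Gaussian mollifier and multiply by smooth cutoffs), using dominated convergence against the finite-dimensional law. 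This yields density of $\cyl(M)$.

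\textbf{The operator $\A$ maps $\cyl(M)$ into $L^2(\XX,m)$.} Here the point is simply that for $F(M)=f(I^1_M(\varphi_1),\dots,I^1_M(\varphi_n))$ with $f\in C^\infty_b$, each partial derivative $\de_k f$ is bounded, and $H_{\varphi_k}\in L^2(\D^2)$ by the bound \cref{eq:H} together with the fact that $H_{\varphi_k}$ is bounded with compact support in $\obar\D{}^2$; therefore $I^2_M(H_{\varphi_k})\in L^2(\Omega,\F,\PP)$ by the continuity of $I^2_M:L^2(\D^2)\to L^2$ established in the preceding lemma, with second moment controlled via the moment formulae of \cref{ssec:double}. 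A finite sum of products of a bounded function with an $L^2$ random variable is in $L^2$, so $\A F(M)\in L^2(\XX,m)$. One should also check this is well-defined, i.e.\ independent of the representation of $F(M)$ as a function of a finite family; this follows because the chain-rule expression \cref{eq:generator} is exactly what one gets by formally differentiating along the putative flow, and two representations differ by a relation among the $I^1_M(\varphi_j)$ that is stable under the symmetrized transport operator — alternatively, uniqueness will be automatic once the third part identifies $\A$ with an honest generator.

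\textbf{Identification of $\A$ as the restriction of the Koopman generator.} Assume $\T_t$ is a solution flow in the sense of \cref{def:flow}, set $\omega_t=\T_t M$, and let $\U_t F=F\circ\T_t$. For $F\in\cyl(M)$, $F(\omega_t)=f(I^1_{\omega_t}(\varphi_1),\dots,I^1_{\omega_t}(\varphi_n))$, and by \cref{def:sol} each $t\mapsto I^1_{\omega_t}(\varphi_j)$ is, $\PP$-a.s., an absolutely continuous function with derivative $I^2_{\omega_t}(H_{\varphi_j})$ by \cref{eq:eulerweak}. The classical chain rule then gives, $\PP$-a.s.\ for a.e.\ $t$,
\begin{equation*}
\frac{d}{dt}F(\omega_t)=\sum_{k=1}^n \de_k f\bigl(I^1_{\omega_t}(\varphi_1),\dots,I^1_{\omega_t}(\varphi_n)\bigr)\, I^2_{\omega_t}(H_{\varphi_k})=(\A F)(\omega_t).
\end{equation*}
Integrating, $\U_t F-F=\int_0^t \U_s(\A F)\,ds$ as an identity in $L^2(\XX,m)$ (the $L^2$-valued Bochner integral is justified by the local square-integrability noted in \cref{rmk:integrability} together with strong continuity of $\U_s$ and the fact that $s\mapsto \U_s(\A F)$ is, after this computation, the a.e.\ derivative which is in $L^1_{\mathrm{loc}}$ in the $L^2$-norm). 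Dividing by $t$ and letting $t\to 0$, strong continuity gives $\U_t F-F)/t\to \A F$ in $L^2$, so $F$ lies in the domain of the generator and the generator agrees with $\A$ on $\cyl(M)$.

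\textbf{Main obstacle.} The delicate point is the interchange of the pointwise (in $\omega$) differentiation with the $L^2(\XX,m)$-limit defining the generator: one must upgrade the $\PP$-a.s., a.e.-$t$ chain rule to an identity of $L^2$-valued functions of $t$ and then extract the strong derivative at $t=0$. This requires a uniform-in-$t$ (locally) square-integrability bound on $I^2_{\omega_t}(H_{\varphi_k})$ — available from \cref{rmk:integrability} because the marginal law of $\omega_t$ is fixed equal to $[a,q,\nu]$ — plus dominated convergence in $L^2$; the stationarity of the one-time marginals is exactly what makes these bounds time-independent and the argument go through. Everything else is routine measure theory and the continuity properties of the stochastic integrals recalled in \cref{sec:randomfields}.
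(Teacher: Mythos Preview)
Your proof is correct and follows essentially the same structure as the paper's: density via the $\sigma$-field generated by the $I^1_M(\varphi)$, the $L^2$-mapping property from boundedness of $\de_k f$ and square-integrability of $I^2_M(H_{\varphi_k})$, and the chain rule along the flow for the generator. The only differences are cosmetic --- the paper obtains density through trigonometric functions and uniqueness of Fourier transforms rather than mollification, and is terser on the passage from the a.s.\ chain rule to the $L^2$-generator, which you spell out more carefully via the integrated identity $\U_t F - F = \int_0^t \U_s(\A F)\,ds$.
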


\begin{proof}
Density of the class $C(M)$ is proved by means of the following two facts: 
(1) the $\sigma$-field generated by random variables of the type $I_M^1(\varphi)$ 
is the same as the $\sigma$-field generated by $M$, 
and (2) the class $C^\infty_b(\R^n,\C)$ contains arbitrary products of trigonometric functions. 
To conclude, one then classically exploits uniqueness of Fourier transforms 
-- \emph{e.g.} along the lines of \cite[proof of Theorem 2.2.4]{NouPec12}. 
The second statement is an easy consequence of the fact that double stochastic integrals in $M$ have finite second moments.
	
As for the last statement, a process $\omega_t$ given by a solution flow as in 
	\cref{def:flow} satisfies \cref{eq:eulerweak},
	so $F(\omega_t)$, $F\in\cyl(M)$, is almost surely differentiable at almost all times 
	(recall also \cref{rmk:integrability}) and
	\begin{equation*}
	\frac{d}{dt}F(\omega_t)=\sum_{k=1}^{n} \de_k f(I^1_{\omega_t}(\varphi_1),\dots, I^1_{\omega_t}(\varphi_k))I^2_{\omega_t}(H_{\varphi_k})
	=\A F(\omega_t).\qedhere
	\end{equation*}
\end{proof}

The following is the main result of the present contribution.

\begin{theorem}\label{thm:main}
	For all observables $F\in\cyl(M)$ of the form above, it holds
	\begin{equation*}
		\int_\XX \A F(M) d m(M)=0.
	\end{equation*}
	As a consequence, the densely defined operator $(\A,\cyl(M))$ is skew-symmetric.
\end{theorem}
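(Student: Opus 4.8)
The plan is to prove the integral identity $\int_\XX \A F\, dm = 0$ for $F\in\cyl(M)$; skew-symmetry then follows by a standard polarization-type argument, which I sketch at the end. Write $F(M) = f(I^1_M(\varphi_1),\dots,I^1_M(\varphi_n))$, so that
\[
\A F(M) = \sum_{k=1}^n \de_k f\big(I^1_M(\varphi_1),\dots,I^1_M(\varphi_n)\big)\, I^2_M(H_{\varphi_k}).
\]
The first step is to expand everything into the chaos/moment formalism recalled in \cref{ssec:double}: writing $M = a\,dx + \sqrt q\, W + P$, each $I^2_M(H_{\varphi_k})$ decomposes as in the displayed formula for $I^2_M(\tilde h)$ into a constant term, a first-order term $2a\, I^1_M(\int H_{\varphi_k}(x,\cdot)dx)$, a pure second Wiener chaos term $q\, I^2_W(\widetilde{H_{\varphi_k}})$, a mixed term $2\sqrt q\, I^1_W I^1_P(\widetilde{H_{\varphi_k}})$, and a pure second Poisson chaos term $I^2_P(\widetilde{H_{\varphi_k}})$. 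The key structural input is that $H_{\varphi_k}$ is \emph{antisymmetric} on the off-diagonal: since $K(x,y)$ is antisymmetric and $H_{\varphi_k}(x,y) = \tfrac12(\nabla\varphi_k(x)-\nabla\varphi_k(y))\cdot K(x,y)$, one checks $H_{\varphi_k}(x,y) = H_{\varphi_k}(y,x)$ is in fact symmetric — wait, more carefully: swapping $x,y$ flips the sign of $\nabla\varphi_k(x)-\nabla\varphi_k(y)$ and the sign of $K$, so $H_{\varphi_k}$ is symmetric, hence $\widetilde{H_{\varphi_k}} = H_{\varphi_k}$, but the relevant diagonal object $\int_\D H_{\varphi_k}(x,x)\,dx = 0$ because $\nabla\varphi_k(x)-\nabla\varphi_k(x)=0$. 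This vanishing of the diagonal is what will kill the constant and the $\Nsc$-compensation terms.

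The second step is to integrate against $m = \expt{\cdot}$. Conditioning is the main device: I would condition on the Gaussian field $W$ (equivalently, split the $L^2$-space into Wiener–Poisson chaos). For the pure second Wiener chaos part $q\, I^2_W(H_{\varphi_k})$, I use the integration-by-parts / Malliavin duality formula on Gaussian space: $\expt{\de_k f(I^1_W(\varphi_1),\dots)\, I^2_W(H_{\varphi_k})}$ can be rewritten, via the fact that $I^2_W(H_{\varphi_k})$ is a divergence $\delta(\cdot)$ of a suitable first-order element, as $\expt{\brak{D(\de_k f), \cdot}}$, and summing over $k$ produces $\sum_k \partial_k\partial_j f \cdot \brak{\varphi_j, H_{\varphi_k}\text{-contraction}}$ terms that cancel pairwise because of the antisymmetry of $K$ built into $H_{\varphi_k}$ (this is precisely the mechanism by which the nonlinear term in Euler's equation is formally divergence-free in phase space). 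The same computation on Poisson space, using the Mecke formula / the Poisson integration-by-parts identity in place of Gaussian IBP, handles the $I^2_P$ term; the mixed term $I^1_W I^1_P$ is handled by conditioning on one of the two noises and applying the corresponding one-dimensional IBP, again producing a sum that telescopes to zero by antisymmetry. The first-order term $2a\, I^1_M(\int H_{\varphi_k}(x,\cdot)\,dx)$: here $\int_\D H_{\varphi_k}(x,y)\,dx = \tfrac12\int_\D(\nabla\varphi_k(x)-\nabla\varphi_k(y))\cdot K(x,y)\,dx = \tfrac12(u_{\varphi_k}(y) - \nabla\varphi_k(y)\cdot\int K(x,y)dx)$ — one argues this integrand pairs against $\de_k f$ to give a total-derivative / Liouville-type cancellation when summed over $k$; alternatively, and more cleanly, I would note that the full expression $\sum_k \de_k f \cdot I^2_M(H_{\varphi_k})$ is, up to the stochastic-integral corrections of \cref{eq:relation}, the sample-wise directional derivative of $F$ along the (formal) Euler vector field, and the Lebesgue/Liouville invariance encoded in \cref{hyp:main} forces its $m$-expectation to vanish; the rigorous version of that heuristic is exactly the chaos-by-chaos IBP bookkeeping just described.

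The third step, skew-symmetry: given $F, G\in\cyl(M)$, one applies the identity $\int_\XX \A(FG)\,dm = 0$ (valid since $FG\in\cyl(M)$, because $C^\infty_b$ is an algebra) together with the Leibniz rule $\A(FG) = (\A F)G + F(\A G)$, which holds because $\A$ is a first-order differential operator in the variables $I^1_M(\varphi_j)$ — this gives $\int (\A F)G\,dm = -\int F(\A G)\,dm$, i.e. $\brak{\A F, \obar G}_{L^2} = -\brak{F, \obar{\A G}}_{L^2}$; handling the complex conjugation (the $L^p$ are complex) just requires noting $\A$ commutes with complex conjugation since $\partial_k f$ of a real-linear argument behaves correctly, or equivalently applying the real identity to real and imaginary parts. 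The main obstacle I anticipate is \emph{bookkeeping and justification of the integration by parts}: one must verify that all the chaos components genuinely lie in $L^2$ (which \cref{lem:cylinder} and the moment formulas of \cref{ssec:double} guarantee, since $H_{\varphi_k}\in L^2(\D^2)$ by the bound \eqref{eq:H} and $\varphi_k$ having compact support away from $\partial\D$), that the Gaussian and Poisson IBP formulas apply to $C^\infty_b$ functions of finitely many integrals (standard, but needs the $\nu$-integrability hypotheses), and above all that the cross terms organize into antisymmetric pairs that cancel — this last point is where the antisymmetry of the Biot–Savart kernel $K$, inherited by $H_\varphi$, does all the work, and getting the combinatorics of the double sum $\sum_{j,k}$ exactly right is the crux of the argument.
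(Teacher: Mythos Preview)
Your overall architecture matches the paper's: decompose $I^2_M$ into Gaussian, Poissonian, mixed, and drift pieces; use Malliavin integration by parts for the Gaussian part, Mecke's formula for the Poisson part, and the Leibniz rule to pass from $\expt{\A F}=0$ to skew-symmetry. That last step is exactly \cref{cor:skewsymm} in the paper.

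However, there is a genuine gap in the cancellation mechanism you invoke. You expect that after Gaussian IBP the terms ``cancel pairwise because of the antisymmetry of $K$'', organising into a double sum $\sum_{j,k}$ with a $(j,k)\leftrightarrow(k,j)$ sign flip. This is not what happens. After IBP one is led (either directly, or via the paper's dual route of computing a Skorokhod divergence and conditioning) to a \emph{triple} sum $\sum_{h,k,\ell} S_{h,k,\ell}\, C_{h,k,\ell}$, with $S$ totally symmetric in its indices and $C_{h,k,\ell}=\int_{\D^2} H_{\varphi_\ell}\varphi_k\varphi_h$. The vanishing requires the \emph{cyclic} identity $C_{h,k,\ell}+C_{\ell,h,k}+C_{k,\ell,h}=0$, not a two-index antisymmetry. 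The paper obtains this identity (\cref{lem:fourier2}) only after specialising the test functions to Dirichlet eigenfunctions $\phi_k$, for which $(-\Delta)^{-1}\phi_k=\lambda_k^{-1}\phi_k$ makes the computation close; the companion identity $\int_\D \phi_k(x)H_{\phi_k}(x,y)\,dx=0$ (\cref{lem:fourier1}) likewise uses the eigenfunction property and fails for generic $\varphi\in\S(\D)$, since $\nabla\varphi\cdot\nabla^\perp(G\ast\varphi)$ need not vanish pointwise. The paper then extends from $\cyl_F$ to $\cyl$ by a density argument controlling $\A F-\A F_m$ in $L^1$, where $F_m$ uses Fourier-truncated test functions. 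Your plan omits both the reduction to eigenfunctions and the cyclic symmetry, and without them the Gaussian and mixed parts do not close.

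A second, smaller gap: for the drift contribution you do not arrive at the clean fact $\int_\D H_\varphi(x,y)\,dx=0$ (the paper's \cref{rmk:intH}, using that $\nabla_x G$ is normal to $\partial\D$); this kills the $a$-dependent terms outright, with no further ``Liouville-type'' argument needed. For the Poisson part, Mecke's formula is indeed the tool, but the actual cancellation in \cref{lem:poissoninv} comes from recognising $\sum_k\partial_k f\cdot\xi\nabla\varphi_k(x)=\nabla_x F(P+\xi\delta_x+\xi'\delta_{x'})$ and then integrating by parts in the \emph{spatial} variables against the divergence-free $K$ --- a step you only allude to.
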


In \cref{sec:conclusion} we will detail how from this one can deduce the existence of self-adjoint extensions
of $(\A,\cyl(M))$, and discuss related issues anticipated in the Introduction.

Before we move to the proof of \cref{thm:main}, we digress on the particular case in which $M\sim [0,0,\nu]$,
and $\nu$ is a finite measure,
which is both simpler and better behaved with respect to the general one,
and it also has a particular relevance in fluid-dynamics.

\subsection{Point Vortices}\label{ssec:pv}

If $M\sim [0,0,\nu]$ with $\nu$ a finite measure on $\R$,
then $M$ has the following representation:
letting $\nu=c\sigma$ with $\sigma$ a probability measure, there exist
(on the same probability space on which $M$ is defined)
a Poisson process $N_c$ of parameter $c$, i.i.d. uniform variables $x_1,x_2,\dots$ on $\D$
and i.i.d. real variables $\gamma_1,\gamma_2,\dots$ with law $\sigma$,
all these variables being independent,
such that $M=\sum^{N_c}_{i=0}\gamma_i\delta_{x_i}$.

Euler's evolution starting from an initial datum made of finitely many (weighted) Dirac's delta
becomes a finite-dimensional system, in which positions $x_i$ evolve but intensities $\gamma_i$ do not:
this is the so-called \emph{point vortex system},
for which we recall a classical well-posedness result.

\begin{proposition}\label{prop:pvondomain}
	Given any $\gamma_1,\dots, \gamma_N\in\R^\ast$, for almost all $(x_1,\dots, x_N)\in\D^N$
	with respect to the $N$-fold product Lebesgue measure, there exist a unique, global-in-time solution
	of the singular ODE system
	\begin{equation}\label{eq:pv}
	\dot x_i(t)=\sum_{j\neq i}\gamma_j K(x_i(t),x_j(t)),\quad x_i(0)=x_i.
	\end{equation}
	Such solutions define a measurable, measure-preserving flow $T_t:\D^N\to\D^N$.
\end{proposition}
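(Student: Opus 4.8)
The proof is classical and I would follow the treatment of the point vortex system in \cite[Chapter 4]{Marchioro1994}, organising it around three ingredients: the local theory off the singular set, measure preservation, and almost-everywhere global existence. Recall the decomposition $G(x,y)=-\frac{1}{2\pi}\log\abs{x-y}+g(x,y)$, where the regular part $g$ is harmonic in each variable and smooth in the interior of $\D$, so that $K(x,y)=\nabla^\perp_x G(x,y)$ is smooth for $x\neq y$ in $\D$. I would introduce the \emph{singular set}
\[
\Sigma=\set{\x=(x_1,\dots,x_N)\in(\obar\D)^N:\ x_i=x_j\ \text{for some}\ i\neq j,\ \text{or}\ x_i\in\partial\D\ \text{for some}\ i},
\]
which is closed in $(\obar\D)^N$ and intersects $\D^N$ in the Lebesgue-negligible union of diagonals, together with the vector field $V(\x)=(V_1(\x),\dots,V_N(\x))$, $V_i(\x)=\sum_{j\neq i}\gamma_j K(x_i,x_j)$. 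On the open set $\D^N\setminus\Sigma$ the field $V$ is smooth, so the Cauchy--Lipschitz theorem yields, for every $\x\in\D^N\setminus\Sigma$, a unique maximal solution $t\mapsto X(t,\x)$ of \eqref{eq:pv} on a maximal open interval $(T_-(\x),T_+(\x))\ni 0$, depending jointly smoothly, hence measurably, on $(t,\x)$.

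First I would record that this local flow is measure-preserving wherever it exists: since $\div\nabla^\perp\equiv 0$ and each $G(\cdot,x_j)$ is smooth away from $x_j$,
\[
\div V(\x)=\sum_{i=1}^N\nabla_{x_i}\cdot V_i(\x)=\sum_{i=1}^N\sum_{j\neq i}\gamma_j\,\nabla_{x_i}\cdot\nabla^\perp_{x_i}G(x_i,x_j)=0\quad\text{on}\ \D^N\setminus\Sigma,
\]
so Liouville's theorem gives $\abs{X(t,E)}=\abs{E}$ for every Borel set $E$ on which the flow is defined up to time $t$. This is the mechanism underlying the final measure-preservation claim.

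The crux is to upgrade the local solutions to \emph{global} ones for almost every initial datum, i.e.\ to show that the exceptional set $\mathcal{E}=\set{\x\in\D^N\setminus\Sigma:\ T_+(\x)<\infty\ \text{or}\ T_-(\x)>-\infty}$ is Lebesgue-null. By standard ODE theory a maximal solution with finite lifespan must leave every compact subset of $\D^N\setminus\Sigma$, hence approach $\Sigma$ (a collision of two or more vortices, or a vortex reaching $\partial\D$). I would combine two facts, exactly as in \cite[Chapter 4]{Marchioro1994}. First, the system is Hamiltonian, with conserved energy
\[
\mathcal{H}(\x)=\sum_{i<j}\gamma_i\gamma_j\,G(x_i,x_j)+\tfrac12\sum_{i}\gamma_i^2\,g(x_i,x_i),
\]
whose interaction term diverges to $+\infty$ when two same-sign vortices collide and whose self-interaction term, built from $g$, diverges as any single vortex approaches $\partial\D$; conservation of $\mathcal{H}$ thereby rules out a large class of singular configurations (a lone vortex reaching the boundary, or a same-sign cluster collapsing). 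Second, since conservation laws cannot by themselves exclude collisions of vortices whose intensities cancel, these are excluded at the level of measure by a recurrence-type argument exploiting the measure preservation established above: writing $\Sigma_\eps$ for the $\eps$-neighbourhood of $\Sigma$, one uses the velocity bound $\abs{V}=O(\mathrm{dist}(\cdot,\Sigma)^{-1})$ near $\Sigma$ together with the invariance of Lebesgue measure to show that the measure of initial data whose trajectory enters $\Sigma_\eps$ within a fixed horizon $[-T,T]$ is at most some $\eta(\eps,T)$ with $\eta(\eps,T)\to 0$ as $\eps\to 0$; letting $\eps\to 0$ yields $\abs{\mathcal{E}}=0$. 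I expect this measure-theoretic no-collision estimate to be the main obstacle, since the velocity bound degenerates precisely on $\Sigma$ and the argument must quantify how little preserved mass can be funnelled into the shrinking sets $\Sigma_\eps$.

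Granting the above, the proof concludes as follows. On the full-measure set $\D^N\setminus(\Sigma\cup\mathcal{E})$ the solution $X(t,\cdot)$ is defined for all $t\in\R$, and uniqueness gives the group law $X(t+s,\cdot)=X(t,X(s,\cdot))$; setting $T_t(\x)=X(t,\x)$ there and, say, $T_t=\id$ on the null set $\Sigma\cup\mathcal{E}$, I obtain for each $t$ a Borel-measurable, almost surely invertible self-map of $\D^N$ with inverse $T_{-t}$. Measurability in $\x$ is inherited from the joint measurability of the solution map, while the Liouville computation gives $\abs{T_t^{-1}E}=\abs{E}$ for every Borel $E$; hence $(T_t)_{t\in\R}$ is the desired measurable, measure-preserving flow.
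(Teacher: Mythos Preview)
Your proposal is correct and follows exactly the classical Marchioro--Pulvirenti argument that the paper itself invokes: the paper does not give an independent proof of this proposition but simply refers to \cite[Chapter 2]{Marchioro1984} (the disk case, with minor modifications), and your sketch is precisely that argument fleshed out. The only discrepancy is bibliographic---you cite Chapter~4 of \cite{Marchioro1994} rather than Chapter~2 of \cite{Marchioro1984}---but the underlying strategy (local Cauchy--Lipschitz theory off the singular set, Liouville's theorem for the divergence-free field, and the combination of Hamiltonian conservation with a measure-theoretic no-collision estimate) is the same.
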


The proof can be obtained with minor modifications to the one given in \cite[Chapter 2]{Marchioro1984}
for the case of the disk $\D=\set{x\in\R^2:|x|<1}$, as discussed therein.
The above dynamics is meant to describe the evolution of a planar,
incompressible, inviscid fluid whose vorticity is concentrated at a finite number of points.
These points, called vortices, move under according to the above ODE dynamics,
each under the influence of the other ones, \emph{without self-interactions}.

There is of course a deep connection between point vortex dynamics and Euler's equations 
\cref{eq:eulero} in PDE form:
the former can be obtained as a limit of $L^\infty(\D)$ solutions of the latter,
as detailed by a well-established part of the related literature 
(see the recent work \cite{Butta2018} for complete references).
Our interest for this result stems from the fact that the empirical measure of vortices,
$\omega_t=\sum_{i=1}^{N}\gamma_i\delta_{x_i}$,
still satisfies Euler's equations if these are considered in the symmetrized form \cref{eq:delort},
provided that we set $H_\varphi(x,x)=0$ for all $x$, in order to encode the absence of self-interaction,
see \cite{Marchioro1994}.

With that being said, it is clear that our study in the case $M\sim [0,0,\nu]$, finite $\nu$, essentially reduces to the
point vortex problem. Indeed, each given sample of such $M$ can be regarded as the empirical measure
of a finite set of vortices, and the above observations immediately lead to:

\begin{corollary}\label{cor:pv}
	For almost all $P=\sum_{i=1}^{N_c}\gamma_i \delta_{x_i}$ with respect to the law of $P\sim [0,0,\nu]$,
	$\nu$ finite, by \cref{prop:pvondomain} we can define
	\begin{equation*}
		\T_t(P)=\T_t\pa{\sum_{i=1}^{N_c}\gamma_i \delta_{x_i}}=\sum_{i=1}^{N_c}\gamma_i \delta_{x_i(t)},
	\end{equation*}
	with $x_i(t)$ the solution of \cref{eq:pv} having $N=N_c$ and the $\gamma_i$'s prescribed by $P$,
	starting from $x_i(0)=x_i$. 
	Such $\T_t$ is a solution flow for Euler's dynamics preserving the random measure $P$ in the sense of \cref{def:sol}.	
\end{corollary}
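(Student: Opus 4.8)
The plan is to verify the assertion of \cref{cor:pv} directly against \cref{def:sol}, using \cref{prop:pvondomain} as the sole nontrivial input. The key observation is that, after restricting to a set of full $P$-measure, a sample $P=\sum_{i=1}^{N_c}\gamma_i\delta_{x_i}$ is a finite atomic measure with $N_c<\infty$, all $\gamma_i\neq 0$ (by \cref{rmk:poisson}), and --- crucially --- the configuration $(x_1,\dots,x_{N_c})$ lies in the full-measure set of \cref{prop:pvondomain} for the corresponding intensities. This last point is where the product structure of the law of $P$ must be used: conditionally on $N_c=N$ and on $(\gamma_1,\dots,\gamma_N)$, the positions $(x_1,\dots,x_N)$ are i.i.d.\ uniform on $\D$, hence distributed according to (a normalization of) the $N$-fold product Lebesgue measure, so the exceptional null set of \cref{prop:pvondomain} is still null. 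Summing over the countably many values of $N_c$ preserves fullness of measure.

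Granting that, I would check the three requirements of \cref{def:sol} in turn. For the measure-preservation statement implicit in the definition of a flow, one notes that $\T_t$ acts on a sample of $P$ by moving each atom $x_i$ along the point-vortex flow $T_t$ of \cref{prop:pvondomain} while leaving the weights $\gamma_i$ fixed; since $T_t$ is measure-preserving on $\D^N$ and independent of the $\gamma_i$, and since $N_c$ is untouched, the pushforward of the law of $P$ under $\T_t$ coincides with the law of $P$ --- this gives property (1), the fixed-time marginals. Property (2), measurability in time of $t\mapsto I^1_{\omega_t}(\varphi)=\sum_i\gamma_i\varphi(x_i(t))$ and of $t\mapsto I^2_{\omega_t}(H_\varphi)$, follows at once from continuity (indeed smoothness) of $t\mapsto x_i(t)$ guaranteed by \cref{prop:pvondomain} together with smoothness of $\varphi$ and of $H_\varphi$ off the diagonal.

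The heart of the verification is the weak equation \cref{eq:eulerweak}. Here one computes, for the empirical measure $\omega_t=\sum_i\gamma_i\delta_{x_i(t)}$, that the double stochastic integral reduces to the ordinary pairing with the off-diagonal convention $H_\varphi(x,x)=0$, namely $I^2_{\omega_t}(H_\varphi)=\sum_{i\neq j}\gamma_i\gamma_j H_\varphi(x_i(t),x_j(t))$ --- this is exactly the content of \cref{eq:relation} once the diagonal terms are removed, as recorded in the discussion preceding the corollary (and in \cite{Marchioro1994}). Differentiating $I^1_{\omega_t}(\varphi)=\sum_i\gamma_i\varphi(x_i(t))$ using the ODE \cref{eq:pv} gives $\frac{d}{dt}\sum_i\gamma_i\varphi(x_i(t))=\sum_i\gamma_i\nabla\varphi(x_i(t))\cdot\sum_{j\neq i}\gamma_j K(x_i(t),x_j(t))=\sum_{i\neq j}\gamma_i\gamma_j\nabla\varphi(x_i(t))\cdot K(x_i(t),x_j(t))$; symmetrizing over the ordered pair $(i,j)$ and using antisymmetry of $K$ turns this into $\sum_{i\neq j}\gamma_i\gamma_j H_\varphi(x_i(t),x_j(t))=I^2_{\omega_t}(H_\varphi)$, and integrating in time yields \cref{eq:eulerweak}. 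The group property $\T_{t+s}=\T_t\circ\T_s$ and invertibility are inherited from the corresponding properties of $T_t$ on each $\D^N$.

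The main obstacle I anticipate is not any single computation --- each step above is elementary --- but rather the bookkeeping needed to make the ``for almost all $P$'' claim airtight: one must pass from the full-measure set of \cref{prop:pvondomain}, which is indexed by a \emph{fixed} integer $N$ and \emph{fixed} nonzero intensities, to a full-measure set of samples of $P$, whose ``dimension'' $N_c$ and intensity vector are themselves random. This requires disintegrating the law of $P$ along $N_c$ and along the $\gamma_i$'s, invoking the i.i.d.\ uniform structure of the positions under that disintegration to control the exceptional null sets uniformly, and then reassembling by countable union. A minor secondary point to be careful about is that the exceptional set in \cref{prop:pvondomain} is stated for the product Lebesgue measure, whereas the positions are uniform on the bounded domain $\D$; since these measures are mutually absolutely continuous this is immediate, but it should be mentioned. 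Everything else is a direct unwinding of \cref{def:sol}.
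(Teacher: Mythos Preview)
Your proposal is correct and follows the same core idea as the paper: condition on $N_c$ and on the intensities $\gamma_i$, so that the positions are i.i.d.\ uniform on $\D$ and \cref{prop:pvondomain} applies on a set of full measure. The paper's own proof is terse and records only this conditioning argument for measure preservation, leaving the verification of \cref{eq:eulerweak} to the discussion immediately preceding the corollary; your write-up simply spells out those remaining items of \cref{def:sol} explicitly.
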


\begin{proof}
	The number of vortices and their intensities are invariant for the dynamics of \cref{prop:pvondomain}.
	Hence, it suffices to condition on random variables $N_c$, $\gamma_i$: $x_i$'s are then uniform i.i.d. random variables
	on $\D$, so the dynamics \cref{eq:pv} preserves their law.
\end{proof}

\cref{prop:pvondomain} implies that for almost all configurations of vortices the evolution is unique,
because vortices never travel too close one to another: the ODE system \cref{eq:pv} is driven by a vector field
involving $K$, which is singular only when two or more $x_i$'s coincide.
This implies (as discussed in \cite{Albeverio2003a}) Markov uniqueness of extensions of the generator $\A$.

However, it is possible to exhibit configurations of vortices leading to collapse: we refer to \cite{Grotto2020} for details.
How these phenomena relate to uniqueness in the extension of the generator $\A$ on cylinder function
is an interesting problem, for which we refer to \cite{Albeverio2003a,Grotto2020a}
(see also \cref{sec:conclusion} for further remarks on general $M\sim[0,q,\nu]$).

Finally, we stress that the case in which $\nu$ is not a finite measure has a completely different behavior:
in the general case $M\sim[0,0,\nu]$, samples are \emph{infinite sums} of Dirac's deltas 
(with summable weights), so the problem has an infinite-dimensional nature, just as in the purely Gaussian case.
Indeed, existence of a solution flow for general $\nu$ remains an open problem.

%%%%%%%%%%%%%%%%%%%%%%%%%%%%%%%%%%%%%%%%%%%%%%%%%%%%%%%%%%%%%%%%%%%%%%%%%%%%%%%5
\section{Infinitesimal Invariance}\label{sec:koopman}

We will first prove \cref{thm:main} separately in the purely Gaussian and Poissonian cases,
by using techniques specific to those setting, namely Malliavin's integration by parts and Mecke's formula.
The latter will also be used to deal with the interaction between Gaussian and Poissonian parts.

Infinitesimal invariance of the Gaussian white noise was first proved in \cite{Albeverio1979,Albeverio1990} 
on the 2-dimensional torus,
and our computation generalizes theirs on arbitrary bounded domain.
We will need to resort to Fourier series, and the following observation:

\begin{remark}\label{rmk:intH}
	Since $G(x,y)$ vanishes if either $x$ or $y\in\partial D$,
	$\nabla_x G(x,y)$ is parallel to the outward normal $\hat n$ of $\partial D$.
	With this in mind, integration by parts shows that
	\begin{equation*}
		\forall \phi\in\S(\D), \qquad \int_{\D} H_\phi(x,y)dx=0.
	\end{equation*}
\end{remark}

\subsection{Fourier Decomposition and Gaussian Infinitesimal Invariance}
Denote by $\set{\phi_k}_{k\in\N}$ a complete orthonormal basis of $L^2(\D)$ 
diagonalizing $-\Delta$ with Dirichlet boundary conditions,
\begin{equation*}
	-\Delta \phi_k=\lambda_k \phi_k,\qquad \lambda_k\geq 0.
\end{equation*}
In what follows, imposing that $\lambda_k$'s are ordered increasingly
is not relevant. The forthcoming two Lemmata collect important computations we will rely on, as they concern expressions
naturally appearing when considering the Fourier expansion of $H_\phi$.

\begin{lemma}\label{lem:fourier1}
	For any eigenfunction $\phi_k$ and $y\in D$ it holds
	\begin{equation*}
		\int_\D \phi_k(x)H_{\phi_k}(x,y)dx=0.
	\end{equation*}
\end{lemma}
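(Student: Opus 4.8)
The plan is to prove this identity by a direct symmetry argument, exploiting the explicit form of $H_{\phi_k}$ and the fact that we are coupling it against its ``own'' eigenfunction $\phi_k$. Recall that
\begin{equation*}
  H_{\phi_k}(x,y)=\tfrac12\bigl(\nabla\phi_k(x)-\nabla\phi_k(y)\bigr)\cdot K(x,y),\qquad K(x,y)=\nabla^\perp_x G(x,y),
\end{equation*}
so that $\int_\D \phi_k(x)H_{\phi_k}(x,y)\,dx$ splits into two terms, one containing $\phi_k(x)\nabla\phi_k(x)=\tfrac12\nabla(\phi_k^2)(x)$ and one containing $\phi_k(x)\nabla\phi_k(y)$. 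The first term I would handle by integration by parts in $x$: since $\nabla^\perp_x G(x,y)$ is divergence-free in $x$ (being a rotated gradient) and $G(x,y)$ vanishes for $x\in\partial\D$ — so $\nabla^\perp_x G(x,y)$ is tangent to the boundary, as recalled in \cref{rmk:intH} — the boundary term drops and $\int_\D \tfrac12\nabla(\phi_k^2)(x)\cdot\nabla^\perp_x G(x,y)\,dx=-\int_\D \tfrac12\phi_k^2(x)\,\div_x\bigl(\nabla^\perp_x G(x,y)\bigr)\,dx=0$. This disposes of the self-term without any use of the eigenvalue equation.

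For the remaining term, $\tfrac12\nabla\phi_k(y)\cdot\int_\D \phi_k(x)K(x,y)\,dx$, I would recognise the inner integral as (a rotated gradient of) the solution of the Poisson problem: $\int_\D \phi_k(x)G(x,y)\,dx=\int_\D G(y,x)\phi_k(x)\,dx=(-\Delta)^{-1}\phi_k(y)=\lambda_k^{-1}\phi_k(y)$, using symmetry of $G$ and the eigenvalue equation $-\Delta\phi_k=\lambda_k\phi_k$ with Dirichlet conditions. Hence $\int_\D \phi_k(x)K(x,y)\,dx=\nabla^\perp_y\bigl(\lambda_k^{-1}\phi_k(y)\bigr)=\lambda_k^{-1}\nabla^\perp\phi_k(y)$. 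Therefore the second term equals $\tfrac12\lambda_k^{-1}\,\nabla\phi_k(y)\cdot\nabla^\perp\phi_k(y)$, which vanishes pointwise since $\nabla\phi_k(y)$ and $\nabla^\perp\phi_k(y)$ are orthogonal vectors in $\R^2$. Adding the two contributions gives $\int_\D \phi_k(x)H_{\phi_k}(x,y)\,dx=0$, as claimed.

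The only genuinely delicate point I anticipate is the integration by parts in the first term, because $K(x,y)=\nabla^\perp_x G(x,y)$ has a $|x-y|^{-1}$ singularity at $x=y$, so one cannot integrate by parts naively on all of $\D$. The clean way around this is to excise a small ball $B_\varepsilon(y)$, integrate by parts on $\D\setminus B_\varepsilon(y)$ — where both the boundary term on $\partial\D$ vanishes (tangency of $\nabla^\perp_x G$) and the bulk term vanishes ($\div_x\nabla^\perp_x G=0$) — and then check that the boundary contribution on $\partial B_\varepsilon(y)$ tends to $0$ as $\varepsilon\to0$: the integrand there is $O(\varepsilon^{-1})$ against a sphere of length $O(\varepsilon)$, but the factor $\phi_k^2(x)-\phi_k^2(y)=O(\varepsilon)$ (or, more simply, the fact that $\nabla^\perp_x G(x,y)$ integrated over a small circle around $y$ has no net flux since it is tangential to the circles $|x-y|=\varepsilon$ up to lower-order corrections coming from the regular part of $G$) kills the limit. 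A parallel care is needed when asserting $\int_\D\phi_k(x)G(x,y)\,dx=\lambda_k^{-1}\phi_k(y)$, but this is the standard elliptic regularity/Green's representation fact and can simply be cited. Everything else is routine vector calculus in two dimensions.
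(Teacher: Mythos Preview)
Your argument is correct and follows essentially the same route as the paper: split $H_{\phi_k}$ into the $\nabla\phi_k(x)$ and $\nabla\phi_k(y)$ parts, kill the first via $\div_x K=0$ after integrating by parts, and kill the second by recognising $\int_\D \phi_k(x)K(x,y)\,dx$ as (a scalar multiple of) $\nabla^\perp\phi_k(y)$, orthogonal to $\nabla\phi_k(y)$. Your only slip is a sign in the second term (with the paper's antisymmetry convention $K(x,y)=-K(y,x)$ one gets $-\lambda_k^{-1}\nabla^\perp\phi_k(y)$), which is immaterial since the dot product with $\nabla\phi_k(y)$ vanishes either way; your extra care about the $|x-y|^{-1}$ singularity in the integration by parts is a point the paper leaves implicit.
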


\begin{proof}
	Integrating by parts, we have:
	\begin{align*}
	&2\int_\D \phi_k(x)H_{\phi_k}(x,y)dx= \int_\D \phi_k(x) (\nabla\phi_k(x)-\nabla\phi_k(y))\cdot K(x,y)dx\\
	&\qquad =\int_D \frac 12 \nabla (\phi_k(x))^2 \cdot K(x,y)dx-\nabla \phi_k(y)\cdot \int_D \phi_k(x)K(x,y)dx\\
	&\qquad =- \frac 12\int_D \phi_k(x)^2 \div_x K(x,y)dx+\nabla \phi_k(y)\cdot \int_D \nabla^\perp \phi_k(x)G(x,y)dx,\\
	\end{align*}
	from which we conclude recalling that $\div K=0$ and 
	\begin{equation*}
		\int_D \nabla^\perp \phi_k(x)G(x,y)dx=\frac1{\lambda_k}\nabla^\perp \phi_k(y),
	\end{equation*}
	the latter being orthogonal to $\nabla \phi_k(y)$ at all points $y\in D$.
\end{proof}
 
\begin{lemma}\label{lem:fourier2}
	Define, for $h,k,\ell\in\N$,
	\begin{equation*}
		C_{h,k,\ell}=\int_D\int_D H_{\phi_\ell}(x,y)\phi_k(x)\phi_h(y)dxdy.
	\end{equation*}
	It holds that:
	\begin{itemize}
		\item if any two indices coincide, $C_{h,k,\ell}=0$;
		\item for all $h,k,\ell\in\N$, $C_{h,k,\ell}=C_{k,h,\ell}$, that is: $C$ is symmetric in the first two indices;
		\item for all $h,k,\ell\in\N$,
		\begin{equation}\label{eq:cyclicsymmetric}
		C_{h,k,\ell}+C_{\ell,h,k}+C_{k,\ell,h}=C_{h,\ell,k}+C_{k,h,\ell}+C_{\ell,k,h}=0,
		\end{equation}
		that is, $C_{h,k,\ell}$ is cyclic symmetric in its indices.
	\end{itemize}
\end{lemma}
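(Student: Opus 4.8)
\textbf{Proof plan for \cref{lem:fourier2}.}

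The plan is to deduce all three properties from the structural features of $H_\phi$, namely its antisymmetry in $(x,y)$ and the behaviour of $K$ under integration by parts already exploited in \cref{lem:fourier1}. First I would record the antisymmetry: since $K(x,y)=\nabla^\perp_x G(x,y)$ and $G$ is symmetric, one has $K(y,x)=-K(x,y)$, and therefore $H_\phi(x,y)=\tfrac12(\nabla\phi(x)-\nabla\phi(y))\cdot K(x,y)$ satisfies $H_\phi(y,x)=H_\phi(x,y)$; wait --- one must be careful: swapping $x\leftrightarrow y$ sends $(\nabla\phi(x)-\nabla\phi(y))$ to its negative and $K(x,y)$ to its negative, so indeed $H_\phi$ is \emph{symmetric}. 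Symmetry of $H_{\phi_\ell}$ in $(x,y)$ then immediately gives $C_{h,k,\ell}=C_{k,h,\ell}$ by relabelling the integration variables, which is the second bullet.

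For the first bullet I would treat the three coincidences separately. The cases $\ell=k$ and $\ell=h$ follow directly from \cref{lem:fourier1}: e.g.\ $C_{h,k,k}=\int_D\phi_h(y)\big(\int_D H_{\phi_k}(x,y)\phi_k(x)\,dx\big)dy=0$, and $C_{k,k,\ell}$ reduces to $\int_D\phi_k(y)\big(\int_D H_{\phi_\ell}(x,y)\phi_k(x)\,dx\big)dy$, which by the symmetry of $H_{\phi_\ell}$ equals $\int_D\phi_k(x)\big(\int_D H_{\phi_\ell}(x,y)\phi_k(y)\,dy\big)dx$ and vanishes by the same lemma applied in the other variable. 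Actually the cleanest route is: by symmetry in the first two indices it suffices to rule out $h=k$ and $k=\ell$; the case $h=k$ is the computation just given, and $k=\ell$ is $\int H_{\phi_\ell}(x,y)\phi_\ell(x)\phi_h(y)$, which is exactly \cref{lem:fourier1} integrated against $\phi_h(y)$. The remaining coincidence $h=\ell$ then follows from the symmetry bullet. I should double-check that \cref{lem:fourier1} is genuinely stated with the \emph{same} index on $\phi_k$ and $H_{\phi_k}$ --- it is --- so these reductions are legitimate.

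The cyclic symmetry \eqref{eq:cyclicsymmetric} is the substantive point and I expect it to be the main obstacle. The idea is to expand $C_{h,k,\ell}$ using $H_{\phi_\ell}(x,y)=\tfrac12(\nabla\phi_\ell(x)-\nabla\phi_\ell(y))\cdot K(x,y)$ and then integrate by parts to move derivatives around, exploiting $\div_x K(x,y)=0$, the boundary vanishing of $G$ (so no boundary terms, as in \cref{rmk:intH}), and the eigenrelation $\int_D \nabla^\perp\phi_k(x)G(x,y)\,dx=\lambda_k^{-1}\nabla^\perp\phi_k(y)$. Concretely I would write the symmetrized sum $C_{h,k,\ell}+C_{\ell,h,k}+C_{k,\ell,h}$ as a single integral over $D\times D$ of $K(x,y)$ contracted against a combination of gradients of $\phi_h,\phi_k,\phi_\ell$, and show that after integrating by parts in $x$ (turning $K$ into $G$ via $\div$ and $\nabla^\perp$) the integrand collapses to something manifestly zero --- either by the eigenfunction identity producing terms proportional to $\nabla\phi_j\cdot\nabla^\perp\phi_j=0$, or by a telescoping of the three cyclic contributions. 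An alternative, possibly slicker, derivation: recognize $\sum_{\text{cyc}} C$ as (a multiple of) the integral $\int_{D^2} K(x,y)\cdot\nabla_x\big(\phi_h(x)\phi_k(x)\big)\,\phi_\ell(y)$-type expressions, i.e.\ as the trilinear form $b(\phi_h,\phi_k,\phi_\ell)$ associated with the Euler nonlinearity, whose total antisymmetry/cyclic-invariance under the energy and enstrophy pairings is classical; then \eqref{eq:cyclicsymmetric} is the statement that the enstrophy-type pairing $\int \phi_\ell\,(u_{\phi_h}\cdot\nabla\phi_k)$ vanishes when summed cyclically, which is exactly the conservation-of-enstrophy computation. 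Either way the hard part is bookkeeping the integrations by parts cleanly enough that the cancellation is visible; I would do it via the trilinear-form viewpoint to keep the algebra under control.
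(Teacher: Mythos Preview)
Your handling of the second bullet and of the coincidences $k=\ell$, $h=\ell$ is fine. The genuine gap is the case $h=k$: you claim that $C_{k,k,\ell}=\int_D\phi_k(x)\bigl(\int_D H_{\phi_\ell}(x,y)\phi_k(y)\,dy\bigr)dx$ vanishes ``by the same lemma applied in the other variable'', but \cref{lem:fourier1} asserts $\int_D \phi_j H_{\phi_j}(\cdot,y)=0$ only when the index on $\phi$ and the index on $H$ \emph{coincide}. For $C_{k,k,\ell}$ with $\ell\neq k$ you would need $\int_D H_{\phi_\ell}(x,y)\phi_k(y)\,dy=0$, and that is simply not what the lemma says (and is generally non-zero). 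You even flag the matching-index requirement yourself, then violate it in the very next step. So the first bullet is not established by your argument.

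The paper's proof sidesteps this by first collapsing the \emph{double} integral to a \emph{single} one: integrating by parts and using $\int_D G(x,y)\phi_j(y)\,dy=\lambda_j^{-1}\phi_j(x)$ yields
\[
C_{h,k,\ell}=\Bigl(\tfrac{1}{\lambda_h}-\tfrac{1}{\lambda_k}\Bigr)\int_D \nabla\phi_\ell\cdot\nabla^\perp\phi_k\,\phi_h\,dx.
\]
From this formula everything is immediate: $h=k$ kills the prefactor; $\ell=k$ kills the integrand pointwise via $\nabla\phi_k\cdot\nabla^\perp\phi_k=0$; and after one further integration by parts $C_{\ell,h,k}$ and $C_{k,\ell,h}$ become the \emph{same} integral with prefactors $(\lambda_\ell^{-1}-\lambda_h^{-1})$ and $(\lambda_k^{-1}-\lambda_\ell^{-1})$, so the cyclic sum telescopes to zero. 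Your plan for \eqref{eq:cyclicsymmetric} (write the cyclic sum and integrate by parts, or invoke the antisymmetry of the Euler trilinear form) is headed in the right direction but remains a sketch; the single-integral representation is precisely what makes the bookkeeping trivial, and it closes the $h=k$ gap for free.
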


\emph{En passant}, we recall that cyclic symmetric functions are one of the three symmetry classes
partitioning the set of functions of three variables, 
the other classes being the symmetric and antisymmetric functions, \cite{Metropolis1991}.
 
\begin{proof}
	Expanding the definition of $H_{\phi_\ell}$, recalling that $K(x,y)=\nabla^\perp_x G(x,y)$
	and integrating by parts in $x$, we obtain
	\begin{align*}
		C_{h,k,\ell}
		&=\frac12 \int_D\int_D \pa{\nabla\phi_\ell(x)-\nabla\phi_\ell(y)}\nabla_x^\perp G(x,y)\phi_k(x)\phi_h(y)dxdy\\
		&=\pa{\frac1{\lambda_h}-\frac1{\lambda_k}} \int_D \nabla\phi_\ell(x)\nabla^\perp \phi_k(x) \phi_h(x)dx.
	\end{align*}
	If any two indices coincide, the latter expression readily shows that $C_{h,k,\ell}=0$:
	either the difference of eigenvalues is null or the integrand vanishes due to orthogonality.
	
	Starting from the integral in $dx$ only we just obtained to represent $C_{h,k,\ell}$,
	integrating by parts we have
	\begin{align*}
		C_{\ell,h,k}&=\pa{\frac1{\lambda_\ell}-\frac1{\lambda_h}} \int_D \nabla\phi_\ell(x)\nabla^\perp \phi_k(x) \phi_h(x)dx,\\
		C_{k,\ell,h}&=\pa{\frac1{\lambda_k}-\frac1{\lambda_\ell}} \int_D \nabla\phi_\ell(x)\nabla^\perp \phi_k(x) \phi_h(x)dx,
	\end{align*}
	which suffices to prove that the right-hand side of \cref{eq:cyclicsymmetric} vanishes.
	Noticing that $C_{h,k,\ell}$ is invariant under exchange of the first two indices,
	which is clear from the definition, concludes the proof
	of \cref{eq:cyclicsymmetric}, since this symmetry transforms one member of the equation into the other.
\end{proof}

Let us introduce a subclass of cylinder functions depending only on couplings with elements of the Fourier basis $\set{\phi_k}_{k\in\N}$,
\begin{equation*}
\cyl_F(W)=\set{F(W)=f(W(\phi_1),\dots, W(\phi_k)), n\in\N, f\in C^\infty_b(\R^n;\C)}.
\end{equation*}
We first prove infinitesimal invariance on these observables, and then extend by density
to the full $\cyl(W)$.

\begin{lemma}\label{lem:gaussinv}
	For all $F\in\cyl(W)$ it holds
	\begin{equation*}
	\expt{\A F(W)}=0.
	\end{equation*}
\end{lemma}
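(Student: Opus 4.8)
The strategy is to reduce to the finite-dimensional subclass $\cyl_F(W)$ and then to use Gaussian integration by parts (the Malliavin/Ornstein--Uhlenbeck formula), exploiting the antisymmetry encoded in the constants $C_{h,k,\ell}$ of \cref{lem:fourier2}. First I would fix $F(W)=f(W(\phi_{k_1}),\dots,W(\phi_{k_n}))\in\cyl_F(W)$, written on finitely many distinct basis vectors $\phi_{k_1},\dots,\phi_{k_n}$, and compute
\begin{equation*}
\expt{\A F(W)}=\sum_{j=1}^n\expt{\de_j f(W(\phi_{k_1}),\dots,W(\phi_{k_n}))\, I^2_W(H_{\phi_{k_j}})}.
\end{equation*}
The key point is that $H_{\phi_{k_j}}$ is not smooth across the diagonal, but the \emph{double Wiener--It\^o integral} $I^2_W(H_{\phi_{k_j}})=I^2_W(\tilde H_{\phi_{k_j}})$ only sees the $L^2(\D^2)$-class, and expanding $\tilde H_{\phi_{k_j}}$ in the orthonormal basis $\{\phi_a\otimes\phi_b\}$ of $L^2(\D^2)$ gives $I^2_W(H_{\phi_{k_j}})=\sum_{a,b} c^{(j)}_{a,b}\,(W(\phi_a)W(\phi_b)-\delta_{ab})$ with coefficients $c^{(j)}_{a,b}=\int_{\D^2}\tilde H_{\phi_{k_j}}(x,y)\phi_a(x)\phi_b(y)\,dxdy$, which for $j,a,b$ chosen among the relevant indices are exactly (a symmetrization of) the constants $C_{a,b,k_j}$.

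\textbf{Main computation.} Using the Gaussian integration-by-parts identity $\expt{G\,(W(\phi_a)W(\phi_b)-\delta_{ab})}=\expt{\de^2_{a,b}G}$ for $G$ a smooth cylinder function in the $W(\phi_\bullet)$'s (this is precisely the duality between the $n$-th Wiener chaos and iterated Malliavin derivatives, cf.\ the references to \cite{Nualart1995,NouPec12}), one obtains
\begin{equation*}
\expt{\A F(W)}=\sum_{j}\sum_{a,b}c^{(j)}_{a,b}\,\expt{\de^2_{(a),(b)}\big(\de_j f\big)(W(\phi_{k_1}),\dots)}.
\end{equation*}
Here only indices $a,b$ lying in $\{k_1,\dots,k_n\}$ contribute, since $\de_j f$ depends only on those coordinates; and by \cref{lem:fourier1} the diagonal coefficients $c^{(j)}_{k_j,k_j}$ — in fact all $C_{h,k,\ell}$ with two equal indices — vanish, so the diagonal corrections $-\delta_{ab}$ are irrelevant and the surviving terms have three \emph{distinct} indices $j\mapsto k_j$, $a$, $b$. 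Relabelling, the whole expression becomes a sum over unordered triples $\{p,q,r\}\subset\{k_1,\dots,k_n\}$ of a third mixed partial derivative of $f$ (a fully symmetric object in its three slots, since partial derivatives commute) multiplied by a coefficient that is a sum of six terms of the form $C_{\cdot,\cdot,\cdot}$ obtained by permuting $p,q,r$. By the cyclic-symmetry relation \cref{eq:cyclicsymmetric} of \cref{lem:fourier2}, $C_{p,q,r}+C_{r,p,q}+C_{q,r,p}=0$ and $C_{p,r,q}+C_{q,p,r}+C_{r,q,p}=0$, so each such coefficient is zero. Hence $\expt{\A F(W)}=0$ for all $F\in\cyl_F(W)$.

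\textbf{Density step and the main obstacle.} Finally I would pass from $\cyl_F(W)$ to $\cyl(W)$: an arbitrary $\varphi\in\S(\D)$ is an $L^2(\D)$-limit of its partial Fourier sums $\varphi^{(N)}=\sum_{k\le N}\langle\varphi,\phi_k\rangle\phi_k$, so $F(W)\in\cyl(W)$ built on $\varphi_1,\dots,\varphi_n$ is approximated by elements of $\cyl_F(W)$; the issue is that $\A$ involves $I^2_W(H_\varphi)$ and one must check $H_{\varphi^{(N)}}\to H_\varphi$ in $L^2(\D^2)$, which follows from the bound \cref{eq:H} together with $\varphi^{(N)}\to\varphi$ in $C^2$ on compacts — more carefully, from continuity of $\varphi\mapsto H_\varphi$ into $L^2(\D^2)$, since $\|H_\varphi\|_{L^2(\D^2)}\le |\D|\,\|H_\varphi\|_\infty\le C\|\varphi\|_{C^2}$ and the map is linear. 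Combined with boundedness of $f$ and its derivatives and the $L^2(\Omega)$-continuity of $I^1_W$ and $I^2_W$, this gives $\A F_N(W)\to\A F(W)$ in $L^1(\Omega)$ along a suitable approximating sequence, so $\expt{\A F(W)}=\lim\expt{\A F_N(W)}=0$. I expect the genuinely delicate point to be exactly this last approximation: one must ensure that truncating $\varphi$ in the Fourier basis produces a sequence in $\cyl_F(W)$ whose images under $\A$ converge in $L^1$ — i.e.\ controlling $\|H_{\varphi}-H_{\varphi^{(N)}}\|_{L^2(\D^2)}$, for which uniform $C^2$ convergence of Fourier partial sums of a fixed $C^\infty_c$ function (or an $H^s$-type argument) is needed, rather than mere $L^2$ convergence.
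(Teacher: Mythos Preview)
Your proof is correct and follows essentially the same strategy as the paper: reduction to $\cyl_F(W)$, Gaussian integration by parts, cancellation via the cyclic symmetry of $C_{h,k,\ell}$ from \cref{lem:fourier2}, and extension to $\cyl(W)$ by Fourier truncation using the $C^2$ bound \cref{eq:H}. The only cosmetic difference is that the paper organises the integration by parts through the Skorohod divergence $D^\ast$ and a conditioning on $\F_n=\sigma(I^1_W(\phi_1),\dots,I^1_W(\phi_n))$ --- thereby obtaining the slightly stronger statement that $\sum_{h,k,\ell\le n} I^1_W(\phi_h)I^1_W(\phi_k)I^1_W(\phi_\ell)\,C_{h,k,\ell}=0$ almost surely --- whereas you expand $I^2_W(H_{\phi_{k_j}})$ directly in the tensor basis and integrate by parts term-by-term; the two computations are equivalent.
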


\begin{proof}
	Consider first $F\in\cyl_F(W)$: in this case we can apply directly Gaussian integration by parts,
	\begin{multline*}
	\expt{\sum_{k=1}^n\de_k f(W(\phi_1),\dots, W(\phi_k))I^2_W\pa{H_{\phi_k}}}\\
	=\expt{\int_D D_x F(W) \pa{\sum_{k=1}^n \phi_k(x) I^2_W\pa{H_{\phi_k}}}dx}\\
	=\expt{F(W) D^\ast\pa{\sum_{k=1}^n \phi_k(x) I^2_W\pa{H_{\phi_k}}}},
	\end{multline*}
	denoting by $D$ the Malliavin derivative and $D^\ast$ its adjoint (see e.g. \cite[Chapter 2]{Nualart1995}).
	Gaussian divergence in the right-hand side is given by
	\begin{multline*}
	D^\ast \pa{\sum_{k=1}^n \phi_k(x) I^2_W\pa{H_{\phi_k}}}\\
	=\sum_{k=1}^n I^1_W(\phi_k) I^2_W\pa{H_{\phi_k}} -2 \sum_{k=1}^n \int_D \phi_k(x) I^1_W\pa{H_{\phi_k}(x,\cdot)}dx.
	%&\qquad=\sum_{k=1}^n I^3_W(\phi_k\otimes H_{\phi_k}),
	\end{multline*}
	We now notice that 
	\begin{equation*}
	\int_D \phi_k(x) I^1_W\pa{H_{\phi_k}(x,\cdot)}dx=I^1_W\pa{\int_D \phi_k(x) H_{\phi_k}(x,\cdot)dx}=0
	\end{equation*}
	thanks to \cref{lem:fourier1}, so the second part of the divergence vanishes.
	Moreover, if $\F_n=\sigma\pa{I^1_W(\phi_1),\dots,I^1_W(\phi_n)}$ is the $\sigma$-algebra
	generated by standard Gaussian variables $I^1_W(\phi_1),\dots,I^1_W(\phi_n)$,
	$F(W)$ is $\F_n$-measurable, so conditioning on $\F_n$ we obtain
	\begin{equation*}
	\expt{\A F(W)}=\expt{F(W) \sum_{k=1}^n I^1_W(\phi_k) \expt{I^2_W\pa{H_{\phi_k}}\mid \F_n}}.
	\end{equation*}
	We conclude by proving that the inner sum in the last expression is almost surely null.
	Expanding the function of two variables $H_{\phi_\ell}(x,y)$ in Fourier basis,
	its coefficients are given by $C_{h,k,\ell}$ of \cref{lem:fourier2}, thus
	we now write
	\begin{align*}
	\expt{I^2_W\pa{H_{\phi_k}}\mid \F_n}
	&=\expt{\left.I^2_W\pa{\sum_{k,h\in\N}\phi_k(x)\phi_h(y) C_{h,k,\ell}}\right| \F_n},\\
	\sum_{\ell=1}^n I^1_W(\phi_\ell) \expt{I^2_W\pa{H_{\phi_\ell}}\mid \F_n}
	&=\sum_{h,k,\ell=1}^n I^1_W(\phi_\ell)I^1_W(\phi_k)I^1_W(\phi_h) C_{h,k,\ell},
	\end{align*}
	where the very last triple sum vanishes almost surely:
	indeed, each summand is the product of $I^1_W(\phi_\ell)I^1_W(\phi_k)I^1_W(\phi_h)$,
	which is symmetric in the indices $h,k,\ell$, and $C_{h,k,\ell}$, which is cyclic symmetric by \cref{lem:fourier2}.
	This concludes the thesis for $F\in \cyl_F(W)$.
	
	Consider now, for $m\in\N$,
	\begin{equation*}
		F(W)=f(I^1_W(\varphi_1),\dots,I^1_W(\varphi_1)),\quad F_m(W)=f(I^1_W(\Pi_m\varphi_k),\dots,I^1_W(\Pi_m\varphi_1)),
	\end{equation*}
	where $\Pi_m \varphi$ denotes the projection of $\varphi$ on the linear span of $\phi_1,\dots \phi_m$.
	We will also write $\Pi_m^\perp$ for the projection on the orthogonal space.
	We already proved the thesis for observables $F_m\in\cyl_F(W)$, so the proof is concluded if we show that
	$\A F-\A F_m$ converges to $0$ in $L^1(\Omega,\F,\PP)$.
	Using the definition of $\A$ we can expand
	\begin{align*}
		\A F(W)-\A F_m(W)
		&=\sum_{k=1}^n \partial_k f(I^1_W(\varphi_1),\dots)\bra{I^2_W(H_{\varphi_k})-I^2_W(H_{\Pi_m\varphi_k})}\\
		&\quad + \sum_{k=1}^n 
		\bra{\partial_k f(I^1_W(\varphi_1),\dots)-\partial_k f(I^1_W(\Pi_m\varphi_1),\dots)}I^2_W(H_{\Pi_m\varphi_k})\\
		&:=(1)+(2),
	\end{align*}
	and show that each of the two sums on the right-hand side go to $0$ in $L^1(\Omega,\F,\PP)$ as $m\to 0$.
	Since derivatives of $f$ are uniformly bounded, we have
	\begin{equation*}
		\expt{(1)^2}
		\leq n \norm{f}_{C^1}^2 \sum_{k=1}^n \expt{\pa{I^2_W(H_{\varphi_k})-I^2_W(H_{\Pi_m\varphi_k})}^2},
	\end{equation*}
	in which every summand vanishes as $m\to\infty$, since
	\begin{multline*}
		\expt{\pa{I^2_W(H_{\varphi_k})-I^2_W(H_{\Pi_m\varphi_k})}^2}
		=\expt{I^2_W(H_{\Pi_m^\perp\varphi_k})^2}=\norm{H_{\Pi_m^\perp\varphi_k}^2}_{L^2(\D)}^2\\
		\leq |\D|^2 \norm{H_{\Pi_m^\perp\varphi_k}^2}_\infty^2\leq C |\D|^2 \norm{\Pi_m^\perp\varphi_k}_{C^2(\D)}^2
		\xrightarrow{m\to\infty} 0,
	\end{multline*}
	where the first step uses linearity of $H_\varphi$ in $\varphi$, the second uses It\^o isometry,
	the fourth \cref{eq:H} and the conclusion follows since $\varphi$ is smooth
	(thus it is approximated by Fourier truncations in all Sobolev norms, and by Sobolev embedding in $C^k$ norms).
	As for the second sum, by Taylor expanding derivatives of $f$ we have
	\begin{equation*}
		\expt{ | (2) |}\leq \norm{f}_{C^2} \sum_{k=1}^n \expt{| I^1_W(\Pi_m^\perp\varphi_k) I^2_W(H_{\Pi_m\varphi_k})|},
	\end{equation*}
	in which every summand vanishes as $m\to\infty$, by virtue of the Cauchy-Schwarz inequality and of the relations
		\begin{align*}
		\expt{I^1_W(\Pi_m^\perp\varphi_k)^2}&=\norm{\Pi_m^\perp\varphi_k}^2_{L^2(\D)}\xrightarrow{m\to\infty} 0,\\
		\expt{I^2_W(H_{\Pi_m\varphi_k})^2} &\leq C |\D|^2 \norm{\Pi_m\varphi_k}_{C^2(\D)}^2
		\leq C |\D|^2 \norm{\varphi_k}_{C^2(\D)}^2,
	\end{align*}
where the second estimate is deduced by rehearsing arguments similar to those given above.	
\end{proof}

\subsection{Poissonian Infinitesimal Invariance}

We consider first invariance of a purely Poissonian random measure.

\begin{lemma}\label{lem:poissoninv}
	If $P\sim[0,0,\nu]$, then for all $F\in\cyl(P)$ it holds
	\begin{equation*}
	\expt{\A F(P)}=0.
	\end{equation*}
\end{lemma}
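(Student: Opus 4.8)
The plan is to evaluate $\expt{\A F(P)}$ by a direct application of the Mecke (Palm) formula for Poisson random measures, after which the resulting integral collapses by an integration by parts in the space variable, exploiting that the Biot--Savart kernel is divergence-free and tangent to $\de\D$ (in the spirit of \cref{rmk:intH}).

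First I would recast the relevant objects in terms of the underlying \emph{uncompensated} Poisson point process $\eta=\sum_i\delta_{(x_i,\gamma_i)}$ on $\D\times\R$ with intensity $\mu=dx\otimes d\nu$, so that $\Nsc=\eta-\mu$. Since compensation means $\int_\R\gamma\,d\nu(\gamma)=0$, we have $I^1_P(\varphi)=\int_{\D\times\R}\gamma\varphi(x)\,\eta(dx,d\gamma)=\sum_i\gamma_i\varphi(x_i)$; and for symmetric $h\in L^2(\D^2)$ the double Poisson integral is, up to a fixed normalising constant which is immaterial here, the off-diagonal sum $I^2_P(h)=\sum_{i\neq j}\gamma_i\gamma_j h(x_i,x_j)$, equivalently an integral against the second factorial measure $\eta^{(2)}$ of $\eta$; the correction terms that normally occur in this identification disappear precisely because $\int_\R\gamma\,d\nu=0$, and the diagonal plays no role since it is Lebesgue-null. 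Using the antisymmetry of $K$ and the symmetry of the double sum, $I^2_P(H_{\varphi_k})=\sum_{i\neq j}\gamma_i\gamma_j\,\nabla\varphi_k(x_i)\cdot K(x_i,x_j)$, so that $\A F(P)$ can be written as the single integral
\begin{equation*}
\A F(P)=\int_{(\D\times\R)^2_{\neq}}\Bigg(\sum_{k=1}^n\de_k f\big(I^1_P(\varphi_1),\dots,I^1_P(\varphi_n)\big)\,\gamma\gamma'\,\nabla\varphi_k(x)\cdot K(x,y)\Bigg)\,\eta^{(2)}\big(d(x,\gamma),d(y,\gamma')\big),
\end{equation*}
whose integrand depends on the whole configuration $\eta$.

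Now I would invoke the bivariate Mecke formula \cite{LastPenrose18}: taking the expectation replaces $\eta$ inside the integrand by $\eta+\delta_{(x,\gamma)}+\delta_{(y,\gamma')}$ and $\eta^{(2)}$ by $\mu\otimes\mu$. Since adding these two atoms turns each argument $I^1_P(\varphi_\ell)$ into $I^1_P(\varphi_\ell)+\gamma\varphi_\ell(x)+\gamma'\varphi_\ell(y)$, introducing
\begin{equation*}
\Theta(x,\gamma,y,\gamma')=\expt{f\big(I^1_P(\varphi_1)+\gamma\varphi_1(x)+\gamma'\varphi_1(y),\dots,I^1_P(\varphi_n)+\gamma\varphi_n(x)+\gamma'\varphi_n(y)\big)}
\end{equation*}
and noting the decisive point that $\nabla_x\Theta(x,\gamma,y,\gamma')=\gamma\sum_{k=1}^n\expt{\de_k f(\cdots)}\nabla\varphi_k(x)$ (same shifted arguments), so that the factor $\gamma$ of the Mecke prefactor $\gamma\gamma'$ exactly reconstitutes the gradient, one obtains
\begin{equation*}
\expt{\A F(P)}=\int_\R\int_\R\gamma'\pa{\int_\D\int_\D\nabla_x\Theta(x,\gamma,y,\gamma')\cdot K(x,y)\,dx\,dy}d\nu(\gamma)\,d\nu(\gamma').
\end{equation*}
For each fixed $(\gamma,y,\gamma')$ the map $x\mapsto\Theta(x,\gamma,y,\gamma')$ is smooth and bounded on $\D$, so integrating by parts in $x$ exactly as in the proof of \cref{rmk:intH} --- using $\div_xK(x,y)=0$ in $\D$ and $K(x,y)\cdot\hat n=0$ for $x\in\de\D$, the logarithmic singularity of $G$ causing no difficulty since the singular part of $K(\cdot,y)$ is tangent to the circles centred at $y$ --- gives $\int_\D\nabla_x\Theta(x,\gamma,y,\gamma')\cdot K(x,y)\,dx=0$, whence $\expt{\A F(P)}=0$.

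All the interchanges above (differentiation under the expectation, Fubini's theorem, and the very applicability of the Mecke formula) are justified by the standing assumptions: $f\in C^\infty_b$ and the $\varphi_k\in\S(\D)$, $\int_\R(\abs{\gamma}\vee\gamma^2)\,d\nu(\gamma)<\infty$ (finiteness of the second moment also giving $\A F(P)\in L^2$ via the second-moment formula for $I^2_P$, hence $\expt{\A F(P)}$ makes sense), and $K(\cdot,y)\in L^1(\D)$ uniformly in $y$. I expect the only genuinely delicate points to be the bookkeeping in identifying $I^2_P(H_{\varphi_k})$ with the off-diagonal sum --- where the compensation $\int_\R\gamma\,d\nu=0$ is exactly what makes the would-be correction terms vanish --- and the justification of the integration by parts against the singular kernel $K$; both become routine once carefully set up. The virtue of this Mecke-based argument over the conditioning/point-vortex-flow reasoning available for finite $\nu$ (cf.\ \cref{cor:pv}) is that it applies verbatim to arbitrary, possibly infinite, $\nu$.
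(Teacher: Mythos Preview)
Your argument is correct and follows essentially the same route as the paper's proof: both apply the bivariate Mecke formula to produce an integral over $(\D\times\R)^2$ of the shifted expectation, recognise via the chain rule that the integrand is a spatial gradient of a smooth bounded function dotted with $K$, and conclude by integration by parts using $\div_x K=0$ and $K\cdot\hat n=0$ on $\partial\D$. The only cosmetic differences are that you pass through the uncompensated process $\eta$ and its factorial measure (exploiting $\int_\R\gamma\,d\nu=0$ to identify $I^2_P(H_{\varphi_k})$ with the off-diagonal sum), and that you first use the antisymmetry of $K$ to reduce $H_{\varphi_k}$ to a single gradient term, whereas the paper keeps both terms of $H_{\varphi_k}$ and integrates by parts in each variable separately.
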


\begin{proof}
	Applying the bivariate Mecke's formula \cite[Theorem 4.4]{LastPenrose18} applied to the Poisson point process
	$\Nsc$ on $\R\times \D$ with intensity $d\nu(\xi)\otimes dx$,
	we infer that
	\begin{multline*}
		\expt{\sum_{k=1}^{n} \de_k f(I^1_P(\varphi_1),\dots, I^1_P(\varphi_k)) I^2_P(H_{\varphi_k})}=\\
		\int_{\R^2}\int_{\D^2} \sum_{k=1}^{n}\expt{\de_k f(I^1_P(\varphi_1)+\xi\varphi_1(x)+\xi'\varphi(x'),\dots)}\\
		\cdot \xi\xi' H_{\varphi_k}(x,x')d\nu(\xi)d\nu(\xi')dxdx'.
	\end{multline*}
	Conclusion now follows from integration by parts on $\D$.
	Observe first that
	\begin{equation}\label{eq:nablaf}
		\nabla_x F(P+\xi\delta_x+\xi'\delta_{x'})
		=\sum_{k=1}^{n}\de_k f(I^1_P(\varphi_1)+\xi\varphi_1(x)+\xi'\varphi(x'),\dots)
		\xi \nabla\varphi_k(x),
	\end{equation}
	so combining the latter, the definition of $H_\varphi$ and Mecke's formula above we obtain
	\begin{multline*}
	\expt{\sum_{k=1}^{n} \de_k f(I^1_P(\varphi_1),\dots, I^1_P(\varphi_k)) I^2_P(H_{\varphi_k})}=\\
	\frac12 \int_{\R^2}\int_{\D^2} 
	\expt{\nabla_x F(P+\xi\delta_x+\xi'\delta_{x'})-\nabla_{x'} F(P+\xi\delta_x+\xi'\delta_{x'})}\\
	\cdot K(x,x')d\nu(\xi)d\nu(\xi')dxdx'.
 	\end{multline*}
	Integrating by parts in $x$ and $x'$ this last expression and recalling that $K(x,x')$
	is divergence-less in both variables, the proof is concluded.
\end{proof}

Combining invariance of the Gaussian and Poissonian parts, we deduce infinitesimal invariance on
cylinder functions.

\begin{lemma}\label{lem:invariance}
	Given any $M\sim [a,q,\nu]$, for all $F\in\cyl(M)$ it holds
	\begin{equation*}
	\expt{\A F(M)}=0.
	\end{equation*}
\end{lemma}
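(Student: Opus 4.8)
The plan is to reduce the statement for a general $M\sim[a,q,\nu]$ to the two special cases already established, namely the purely Gaussian invariance of \cref{lem:gaussinv} and the purely Poissonian invariance of \cref{lem:poissoninv}, together with the cross-term estimate that appears when one expands the double integral $I^2_M(H_{\varphi_k})$ via \eqref{eq:intexpanded}. First I would recall the decomposition $M=a\,dx+\sqrt q\,W+P$ with $W$ and $P$ independent, and note that the constant part $a\,dx$ is harmless: by \cref{rmk:intH} we have $\int_\D H_{\varphi_k}(x,y)\,dx=0$, so any term in $I^2_M(H_{\varphi_k})$ that integrates $H_{\varphi_k}$ against Lebesgue measure in one variable vanishes identically. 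This removes the summands $a^2\int_{\D^2}\tilde h\,d^2x$ and $2a\,I^1_M(\int_\D \tilde h(x,\cdot)dx)$ from the expansion.

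Next I would use \eqref{eq:intexpanded} applied to $h=H_{\varphi_k}$ to write
\begin{equation*}
	I^2_M(H_{\varphi_k})=q\,I^2_W(H_{\varphi_k})+2\sqrt q\,I^1_WI^1_P(H_{\varphi_k})+I^2_P(H_{\varphi_k}),
\end{equation*}
so that $\A F(M)$ splits into a Gaussian piece, a mixed piece, and a Poissonian piece. Taking expectations, the Gaussian piece has zero mean by (an obvious rescaling of) \cref{lem:gaussinv} and the Poissonian piece by \cref{lem:poissoninv} — strictly speaking one should observe that those lemmata were proved for $\A$ built from the full $I^2$ with respect to the pure measure, but the argument there only used the corresponding single term, so it applies verbatim to $I^2_W(H_{\varphi_k})$ and $I^2_P(H_{\varphi_k})$ individually; alternatively one conditions on the independent copy to isolate each piece. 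It then remains to show that the mixed term has zero expectation:
\begin{equation*}
	\expt{\sum_{k=1}^n \de_k f\big(I^1_M(\varphi_1),\dots,I^1_M(\varphi_n)\big)\,I^1_WI^1_P(H_{\varphi_k})}=0.
\end{equation*}

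The main obstacle is precisely this mixed iterated integral term, where the Gaussian and Poissonian randomness interact. I would handle it by conditioning: first freeze the Poisson process $P$ (equivalently, condition on the $\sigma$-field generated by $\Nsc$) and apply Gaussian integration by parts / Malliavin duality in the $W$-variable exactly as in the proof of \cref{lem:gaussinv}. Writing $I^1_WI^1_P(H_{\varphi_k})=I^1_W\big(\int_\D\int_\R \gamma\, H_{\varphi_k}(\cdot,x')\,\Nsc(dx',d\gamma)\big)$, the divergence computation produces, besides a term involving $F$ times a first chaos element, a boundary-type contribution of the form $\int_\D \phi(x)\,(\text{something})(x)\,dx$ that must be shown to vanish; here one uses \cref{lem:fourier1} (after expanding $\varphi_k$ and $H_{\varphi_k}$ in the Fourier basis $\set{\phi_k}$) or, more directly, the antisymmetry of $K$ together with $\div_x K(x,x')=0$ and \cref{rmk:intH}. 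What survives after conditioning is an expression of the type $\expt{F(M)\sum_k I^1_W(\varphi_k)\,I^1_WI^1_P(H_{\varphi_k})}$ up to the contractions, and one then takes the expectation over $P$ and over the remaining Gaussian part, using the symmetry in $(x,x')$ of the product structure against the (cyclic, hence not fully symmetric) behaviour of $H_{\varphi_k}$ — in the same spirit as the vanishing triple sum $\sum I^1_W(\phi_\ell)I^1_W(\phi_k)I^1_W(\phi_h)\,C_{h,k,\ell}=0$ in \cref{lem:gaussinv}. Alternatively, and perhaps more cleanly, one applies the bivariate Mecke formula of \cref{lem:poissoninv} to the $\Nsc$-integration inside the mixed term: this turns the mixed expectation into $\int_{\R^2}\int_{\D^2}\xi\xi'\,H_{\varphi_k}(x,x')\,\expt{\de_k F(M+\xi\delta_x)\,\text{with Gaussian part}}\,d\nu(\xi)d\nu(\xi')dxdx'$, and then integration by parts in $x$ against the divergence-free kernel $K$ (as at the end of the proof of \cref{lem:poissoninv}) kills it. Either route finishes the proof; I expect the Mecke-formula route to be the shorter one to write down carefully, since it sidesteps the need to track Malliavin contraction terms in a mixed chaos.
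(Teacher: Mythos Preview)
Your primary route---remove the drift $a$ via \cref{rmk:intH}, expand $I^2_M(H_{\varphi_k})$ into Gaussian, Poissonian and mixed pieces, dispose of the pure pieces by conditioning on the independent component and invoking \cref{lem:gaussinv} and \cref{lem:poissoninv}, and handle the mixed term by freezing $P$, applying Gaussian integration by parts, killing the contraction with \cref{lem:fourier1}, and finishing with the cyclic symmetry of $C_{h,k,\ell}$---is exactly the paper's proof. One routine point you skip: the Malliavin/Fourier computation for the mixed term is carried out first for $F\in\cyl_F(M)$ (test functions taken from the eigenbasis), and the paper then passes to general $F\in\cyl(M)$ by the same approximation argument as at the end of \cref{lem:gaussinv}. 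You should at least flag this step.

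Your Mecke alternative, however, is misformulated as written. The mixed term $I^1_WI^1_P(H_{\varphi_k})$ carries exactly \emph{one} Poisson integral, so the \emph{bivariate} Mecke formula does not apply, and the expression $\int_{\R^2}\int_{\D^2}\xi\xi'(\cdots)\,d\nu(\xi)\,d\nu(\xi')\,dx\,dx'$ you wrote has one Poisson variable too many. What one can do is apply the \emph{univariate} Mecke identity in the $P$-direction; this leaves a Gaussian single integral $I^1_W\bigl(H_{\varphi_k}(\cdot,x')\bigr)$ inside the expectation, and spatial integration by parts in $x'$ alone does not annihilate it. One must still perform a Gaussian integration by parts in the $W$-variable, after which a symmetrisation in the derivative indices of $f$ combined with $\div_x K=0$ closes the argument. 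So the corrected alternative works, but it is not shorter than the Malliavin route once written out carefully.
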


\begin{proof}
	Assume first that $a=0$.
	We distinguish contributions from the Gaussian and Poissonian parts, and exploit their independence.
	Starting from the definition of $\A$ in \cref{eq:generator} we expand the double stochastic integral according 
	to the decomposition $M=\sqrt q W+P$, 
	\begin{equation*}
		I^2_M(H_\varphi)=qI^2_W(H_\varphi)+2\sqrt{q}I^1_PI^1_W(H_\varphi)+I^2_P(H_\varphi).
	\end{equation*}
	By linearity and Lemmas \ref{lem:gaussinv} and \ref{lem:poissoninv}
	the thesis reduces to
	\begin{equation*}
		\expt{\A F(M)}=\expt{\sum_{k=1}^{n} \de_k f(I^1_M(\varphi_1),\dots, I^1_M(\varphi_k)) I^1_PI^1_W(H_{\varphi_k})}=0.
	\end{equation*}
	
	In order to apply Gaussian integration by parts we resort again to Fourier decomposition
	and assume $F\in\cyl_F(M)$, that is $\varphi_k=\phi_n$ are elements of the Fourier basis.
	We denote in the following by $D_W$ and $D_W^\ast$ the Malliavin derivative and
	Skorohod integral in the Gaussian space associated to $W$.
	For a fixed sample of the compound Poisson process $P$, the functions
	\begin{equation*}
		\sum_{k=1}^{n} \de_k f(I^1_P(\phi_1)+\sqrt q I^1_W(\phi_1),\dots)\phi_1(x),\quad
		\sum_{k=1}^{n} I^1_PI^1_W(H_{\varphi_k})\phi_1(x),
	\end{equation*}
	are cylinder functions of $W$, that is they belong to $\cyl_F(W)$,
	thus by conditioning on $P$ and Gaussian integration by parts (applied as in \autoref{lem:gaussinv}) we obtain
	\begin{multline*}
		\expt{\A F(M)}=\expt{\sum_{k=1}^n \de_k f (\sqrt q I^1_W(\phi_1)+I^1_P(\phi_1),\dots) 
			\sqrt qI^1_P I^1_W(H_{\phi_k})}\\
		= \expt{F(M) \sqrt q D^\ast \pa{\sum_{k=1}^n \phi_k I^1_P I^1_W(H_{\phi_k})}}.
	\end{multline*}
	The Gaussian divergence in the latter is given by
	\begin{align*}
	&D^\ast \pa{\sum_{k=1}^n \phi_k I^1_P I^1_W(H_{\phi_k})}\\
	&\quad = \sum_{k=1}^n I^1_W(\phi_k)I^1_P I^1_W(H_{\phi_k})-2 I^1_P\pa{\int_\D \phi_k(x)H_{\phi_k}(x,\cdot)dx},
	\end{align*}
	in which the deterministic integral vanishes because of \cref{lem:fourier1}, so
	\begin{equation*}
	\expt{\A F(M)}=\expt{F(M) \sum_{k=1}^n I^1_W(\phi_k) I^1_P I^1_W(H_{\phi_k})}.
	\end{equation*}
	By expanding in Fourier series $H_{\phi_k}$ with symbols $C_{h,k,\ell}$ defined in \cref{lem:fourier2},
	and conditioning on the $\sigma$-algebra generated by $I^1_W(\phi_1),\dots I^1_W(\phi_n)$
	and $I^1_P(\phi_1),\dots I^1_P(\phi_n)$ similarly to what we did in the proof of \cref{lem:gaussinv},
	we arrive at
	\begin{equation*}
	\expt{\A F(M)}=\expt{F(M) \sum_{h,k,\ell=1}^n I^1_W(\phi_\ell)I^1_W(\phi_k)I^1_P(\phi_h)C_{h,k,\ell}}.
	\end{equation*}
	We conclude once again with a symmetry argument relying on \cref{lem:fourier2}: 
	since $C_{h,k,\ell}$ is invariant under exchange of $h,k$,
	we can symmetrize the two indices, so that the inner sum becomes
	\begin{multline*}
	\sum_{h,k,\ell=1}^n I^1_W(\phi_\ell)I^1_W(\phi_k)I^1_P(\phi_h)C_{h,k,\ell}\\
	=\frac12 \sum_{h,k,\ell=1}^n I^1_W(\phi_\ell)\pa{I^1_W(\phi_k)I^1_P(\phi_h)+I^1_P(\phi_k)I^1_W(\phi_h)}C_{h,k,\ell},
	\end{multline*}
	and thus must vanish, since each summand is the product of the cyclic-symmetric $C_{h,k,\ell}$ and a symmetric function
	of the indices.

%	Since $W$ and $P$ are independent we can exchange expected values with
%	respect to their laws, so taking first expectation with respect to $P$
%	we can apply Mecke's formula \cite[Theorem 4.1]{LastPenrose18} and obtain
%	\begin{align*}
%		&\expt[P]{\sum_{k=1}^{n} \de_k f(I^1_M(\varphi_1),\dots) I^1_PI^1_W(H_{\varphi_k})}\\
%		&\quad=\int_\R \int_\D \expt[P]{\sum_{k=1}^{n}\de_k f(I^1_M(\varphi_1)+\xi\varphi_1(x),\dots)}
%			\xi I^1_W(H_{\varphi_k}(x,\cdot))d\nu(\xi)dx\\
%		&\quad=\frac12 \int_\R \int_\D \nabla_x \expt[P]{F(M+\xi\delta_x)}
%		\xi I^1_W(K(x,\cdot))d\nu(\xi)dx\\
%		&\qquad -\frac12 \int_\R \int_\D \expt[P]{\sum_{k=1}^{n}\de_k f(I^1_M(\varphi_1)+\xi\varphi_1(x),\dots)}
%		\xi I^1_W(\nabla\varphi_k(\cdot) K(x,\cdot))d\nu(\xi)dx,
%	\end{align*}
%	the second step making use of the definition of $H_\varphi$ and \cref{eq:nablaf}.
%	On the right-hand side, the first summand vanishes after integration by parts in $x$,
%	since $K(x,\cdot)$ is divergence-less.

	In order to remove the additional hypothesis $F\in\cyl_F(M)$ we can proceed by approximation
	as in \cref{lem:gaussinv}. For $m\in\N$ one considers, as we did above,
	the Fourier truncation $\Pi_m\varphi$ of a function $\varphi$, and shows that 
	the following difference vanishes in $L^2(\Omega,\F,\PP)$:
	\begin{align*}
		\sum_{k=1}^n \partial_k f(I^1_M(\varphi_1),\dots)I^2_M(H_{\varphi_k})-
		\sum_{k=1}^n \partial_k f(I^1_W(\Pi_m\varphi_1),\dots)I^2_M(H_{\Pi_m\varphi_k}).
	\end{align*}
	The proof actually proceeds in complete analogy with the one of \cref{lem:gaussinv},
	the only difference being that now mixed integrals appear in the estimates:
	this does not create any additional difficulty since one only uses isometric relations, that are the same for multiple and iterated integrals.
	
	Finally, we remove the additional hypothesis $a=0$. This is easily done thanks to the cancellation
	observed in \cref{rmk:intH}: if $M=a+\sqrt q W+ P$ it holds
	\begin{align*}
		I^2_{M}(H_\phi)&=a^2 \int_{\D^2} H_\phi(x,y)dxdy+ I^1_{\sqrt q W+ P}\pa{a\int_{\D} H_\phi(x,s)dx}
		+I^2_{\sqrt q W+ P}(H_\phi)\\
		&=I^2_{\sqrt q W+ P}(H_\phi),
	\end{align*}
	so one can always reduce to the case $a=0$ by considering the cylinder function of $M-a$ defined by $G(M-a)=F(M)$.
\end{proof}

We are thus only left to prove the last claim of \cref{thm:main}.

\begin{corollary}\label{cor:skewsymm}
	For $M\sim [a,q,\nu]$, the linear operator $\A$ defined on cylinder functions $\cyl(M)$ by \cref{eq:generator}
	is a skew-symmetric operator in $L^2(M)$.
\end{corollary}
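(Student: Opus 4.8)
The plan is to derive skew-symmetry of $(\A,\cyl(M))$ directly from the already established identity $\expt{\A F(M)}=0$, valid for every $F\in\cyl(M)$ (\cref{lem:invariance}), by exploiting the fact that $\A$ acts as a derivation on the algebra $\cyl(M)$. Throughout, the inner product on the complex Hilbert space $L^2(\XX,m)$ is $\brak{u,v}=\int_\XX u\,\overline v\,dm$, and skew-symmetry means $\brak{\A F, G}=-\brak{F,\A G}$ for all $F,G\in\cyl(M)$.

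First I would record that $\cyl(M)$ is a complex algebra, stable under conjugation. Indeed, if $F(M)=f(I^1_M(\varphi_1),\dots,I^1_M(\varphi_n))$ and $G(M)=g(I^1_M(\psi_1),\dots,I^1_M(\psi_p))$ lie in $\cyl(M)$, then $F\,\overline G$ is represented, over the concatenated list $\varphi_1,\dots,\varphi_n,\psi_1,\dots,\psi_p$ of test functions, by $(x,y)\mapsto f(x)\,\overline{g(y)}$, which still belongs to $C^\infty_b(\R^{n+p},\C)$, since a product of two functions, each of which is bounded together with all its partial derivatives, inherits the same property; hence $F\,\overline G\in\cyl(M)$. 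Next I would verify the Leibniz rule $\A(F\,\overline G)=(\A F)\,\overline G+F\,\overline{\A G}$: because each $H_{\psi_j}$ is real-valued and $M$ is a real random measure, the double integrals $I^2_M(H_{\psi_j})$ are real, so differentiating the representing function $(x,y)\mapsto f(x)\,\overline{g(y)}$ and substituting into \eqref{eq:generator} yields the identity at once.

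Finally I would integrate this identity. By \cref{lem:cylinder}, the quantities $\A F$, $\A G$ and $\A(F\,\overline G)$ all belong to $L^2(\XX,m)$, while $F$ and $G$ are bounded; hence every term below is integrable and
\[
\brak{\A F, G}+\brak{F,\A G}=\int_\XX\pa{(\A F)\,\overline G+F\,\overline{\A G}}\,dm=\int_\XX\A\pa{F\,\overline G}\,dm=0,
\]
the last equality being \cref{lem:invariance} applied to the cylinder function $F\,\overline G$. Since $\cyl(M)$ is moreover dense in $L^2(\XX,m)$ (\cref{lem:cylinder}), this is precisely the asserted skew-symmetry. I do not anticipate a genuine obstacle: all the real work — the cancellation of the triple sums — has already been done in \cref{lem:gaussinv,lem:poissoninv,lem:invariance}, and the only points needing (minor) attention are the bookkeeping that realises $F\,\overline G$ as a cylinder function over the merged test functions, together with the verification that $\A$ is a derivation, both of which are routine.
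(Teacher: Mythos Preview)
Your argument is correct and follows essentially the same route as the paper: both proofs observe that $\cyl(M)$ is an algebra on which $\A$ acts as a derivation, and then combine the Leibniz rule with the vanishing $\expt{\A(\cdot)}=0$ from \cref{lem:invariance} to obtain skew-symmetry. Your version is in fact slightly more careful than the paper's, since you track complex conjugation explicitly (working with $F\,\overline G$ and noting that $I^2_M(H_{\psi_j})$ is real so that $\A\overline G=\overline{\A G}$), whereas the paper leaves this implicit.
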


\begin{proof}
	It suffices to observe that $\A$ acts on $\cyl(M)$ as a derivation, that is we have the following Leibnitz rule:
	for $F,G\in\cyl(M)$ also $FG\in\cyl(M)$ and
	\begin{equation*}
		\A(FG)=\A F\cdot G+ F\cdot\A G,
	\end{equation*}
	as it follows from direct inspection of the definition.
	From \cref{lem:invariance} it then follows
	\begin{equation*}
		\expt{F \A G}=\expt{\A(FG)}-\expt{G \A F}=-\expt{G \A F}.\qedhere
	\end{equation*}
\end{proof}

\begin{remark}
	The argument of \cref{cor:skewsymm} clearly applies also to 
	$(\A,\cyl_F(M))$, which is thus closable in $L^2_\C(\XX,m)$.
	The part of \cref{lem:gaussinv} extending the thesis from $\cyl_F(W)$ to $\cyl(W)$
	can then be understood as a proof that $\cyl(W)$ is contained in the domain
	of the closure of $(\A,\cyl_F(W))$.
\end{remark}

%%%%%%%%%%%%%%%%%%%%%%%%%%%%%%%%%%%%%%%%%%%%%%%%%%%%%%%%%%%%%%%%%%%%%%%%%%%%%%%5
\section{Concluding Remarks, Generalizations and Open Problems}\label{sec:conclusion}

We now briefly comment on point {\bf (c)} of the program outlined in \cref{ssec:flow},
and conclude the paper with some remarks on generalization and possible further developments.

\subsection{Self-Adjoint Extensions, Uniqueness and Solution Flows}
By \cref{cor:skewsymm}, the operator $(\A,\cyl(M))$ defined by \cref{eq:generator}
is closable in $L^2(\XX,m)$. We can extend $(\A,\cyl(M))$ to the generator of a group of unitaries on $L^2(\XX,m)$,
by means of the following results from Functional Analysis.

\begin{proposition}[Stone]\label{prop:stone}
	To each self-adjoint operator $(L,D(L))$ on a separable (complex) Hilbert space $H$ it is associated a unique
	strongly continuous one-parameter group of unitary operators $U_t:H\to H$, $t\in\R$ (usually denoted $U_t=e^{itL}$), such that
	\begin{equation*}
	\forall f\in D(L),\quad \frac{d}{dt}U_t f=i L f.
	\end{equation*}
\end{proposition}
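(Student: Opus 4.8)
The plan is to construct $U_t$ explicitly from the spectral calculus of $L$ and then read off each required property, treating the spectral theorem as a black box. Since $(L,D(L))$ is self-adjoint, it provides a projection-valued measure $E$ on the Borel sets of $\R$ with
\begin{equation*}
L=\int_\R \lambda\, dE(\lambda),\qquad D(L)=\set{f\in H:\int_\R\lambda^2\, d\mu_f(\lambda)<\infty},\quad \mu_f(A):=\brak{E(A)f,f}.
\end{equation*}
First I would set, via the bounded Borel functional calculus, $U_t:=\int_\R e^{it\lambda}\,dE(\lambda)$ for $t\in\R$. Because $\abs{e^{it\lambda}}\equiv 1$, multiplicativity of the calculus gives $U_t^\ast U_t=U_tU_t^\ast=\int_\R 1\, dE(\lambda)=\id$, so every $U_t$ is unitary, while $U_0=\id$ and $U_{s+t}=U_sU_t$; thus $\set{U_t}_{t\in\R}$ is a one-parameter unitary group, and since $E(A)$ commutes with each $U_t$ one has $\mu_{U_tf}=\mu_f$, hence $U_t(D(L))=D(L)$ and $LU_tf=U_tLf$.

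Next, strong continuity and the generator. For fixed $f$, $\mu_f$ is a finite measure of total mass $\norm{f}^2$ and $\norm{U_tf-U_sf}^2=\int_\R\abs{e^{it\lambda}-e^{is\lambda}}^2 d\mu_f(\lambda)$; the integrand is bounded by $4$ and tends to $0$ pointwise as $t\to s$, so dominated convergence yields strong continuity. For $f\in D(L)$,
\begin{equation*}
\norm{\frac{U_tf-f}{t}-iLf}^2=\int_\R\abs{\frac{e^{it\lambda}-1}{t}-i\lambda}^2 d\mu_f(\lambda),
\end{equation*}
and since $\abs{(e^{it\lambda}-1)/t}\leq\abs{\lambda}$ for every real $t$, with $\int_\R\lambda^2\,d\mu_f<\infty$, dominated convergence gives $\lim_{t\to 0}(U_tf-f)/t=iLf$, i.e. $iL$ is the infinitesimal generator; applying this to $U_sf\in D(L)$ and using the group law shows $s\mapsto U_sf$ is $C^1$ with $\frac{d}{ds}U_sf=iLU_sf=iU_sLf$, which is the asserted relation.

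Finally, uniqueness. Let $\set{V_t}$ be another strongly continuous unitary group whose generator $B$ extends $iL$ on $D(L)$. Differentiating $t\mapsto\brak{V_tf,V_tf}\equiv\norm{f}^2$ at $t=0$ shows $B$ is skew-symmetric; combined with surjectivity of $\id\pm B$ (available because $\set{V_t}$ is a contraction group in both time directions) this forces $B$ to be skew-adjoint. But $iL$ is already skew-adjoint, since $(iL)^\ast=-iL^\ast=-iL$, hence maximal among skew-symmetric operators, so $iL\subseteq B$ gives $B=iL$; as a $C_0$-group is determined by its generator, $V_t=U_t$ for all $t$. The main obstacle is not conceptual but bookkeeping: one leans entirely on the spectral theorem, must track domains carefully (that $U_t$ preserves $D(L)$ and commutes with $L$, via $\mu_{U_tf}=\mu_f$) and justify the differentiations in the uniqueness step; a spectral-theorem-free alternative is to check $\norm{(\lambda\mp iL)^{-1}}\leq\abs{\lambda}^{-1}$ for real $\lambda\neq 0$ — immediate from $\norm{(\lambda\mp iL)g}^2=\lambda^2\norm{g}^2+\norm{Lg}^2$ together with $\mathrm{ran}(\lambda\mp iL)=H$ — and then invoke the Hille--Yosida generation theorem to produce a $C_0$-group of contractive, hence unitary, operators.
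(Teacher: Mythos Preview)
Your argument is a correct, standard proof of Stone's theorem via the spectral theorem and functional calculus; the construction, the dominated-convergence steps, and the uniqueness argument (skew-adjoint operators admit no proper skew-adjoint extensions, and a $C_0$-group is determined by its generator) are all sound.

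There is nothing to compare against, however: the paper does not prove \cref{prop:stone}. It is quoted, together with von Neumann's extension criterion, as a classical black box from functional analysis in \cref{sec:conclusion}, and is immediately applied to the operator $\tilde\A=-i\A$ without any indication of proof. So your write-up supplies an argument the paper deliberately omits; if anything, it is more than what is required in context.
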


\begin{proposition}[Von Neumann]\label{prop:vonneumann}
	Let $(L,D(L))$ be a densely defined symmetric operator on a separable (complex) Hilbert space $H$.
	Assume that there exists an antilinear, norm-preserving involution $J:H\to H$
	mapping $D(L)$ into itself and commuting with $L$ on $D(L)$.
	Then $L$ has at least a self-adjoint extension.
\end{proposition}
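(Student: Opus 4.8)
The plan is to invoke von Neumann's classical theory of deficiency indices and to show that the conjugation $J$ forces the two indices to coincide. Recall that for a densely defined symmetric operator $(L, D(L))$ one introduces the \emph{deficiency subspaces}
\begin{equation*}
K_{\pm} = \ker(L^{\ast} \mp i) = \operatorname{ran}(L \pm i)^{\perp},
\end{equation*}
and the \emph{deficiency indices} $n_{\pm} = \dim K_{\pm}$. The basic criterion of von Neumann states that $L$ admits a self-adjoint extension if and only if $n_{+} = n_{-}$ (in which case the self-adjoint extensions are parametrized by the unitary maps $K_{+} \to K_{-}$). My goal is thus reduced to proving that the hypotheses on $J$ yield $n_{+} = n_{-}$.

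The first step is to upgrade the commutation relation $JL = LJ$ from $D(L)$ to the adjoint $L^{\ast}$. The key point is that an antilinear, norm-preserving involution is \emph{antiunitary}, that is $\brak{Jx, Jy} = \overline{\brak{x,y}}$. For $g \in D(L^{\ast})$ and $f \in D(L)$, writing $Lf = J(JLf) = J(LJf)$ and using antiunitarity together with $Jf \in D(L)$, one computes
\begin{equation*}
\brak{Jg, Lf} = \overline{\brak{g, LJf}} = \overline{\brak{L^{\ast}g, Jf}} = \brak{JL^{\ast}g, f},
\end{equation*}
where the middle equality is the definition of $L^{\ast}$ tested against $Jf$. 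Since the right-hand side is a bounded functional of $f$, this shows $Jg \in D(L^{\ast})$ with $L^{\ast}Jg = JL^{\ast}g$; hence $J$ maps $D(L^{\ast})$ into itself and commutes with $L^{\ast}$ there.

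With this intertwining in hand, the conclusion follows from antilinearity alone. If $g \in K_{+}$, so that $L^{\ast}g = ig$, then $L^{\ast}(Jg) = J(L^{\ast}g) = J(ig) = -i\, Jg$, whence $Jg \in K_{-}$; the symmetric argument, together with $J^{2} = \id$, shows that $J$ restricts to an antilinear bijection $K_{+} \to K_{-}$. Being antiunitary it carries an orthonormal basis of $K_{+}$ onto an orthonormal basis of $K_{-}$, so $\dim K_{+} = \dim K_{-}$, i.e. $n_{+} = n_{-}$. Applying the deficiency-index criterion then produces at least one self-adjoint extension of $L$, as claimed.

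I expect the only delicate point to be the bookkeeping of complex conjugation in passing from $J$ on $D(L)$ to $J$ on $D(L^{\ast})$: because $J$ is antilinear, the spectral parameter $i$ is sent to $-i$, which is precisely the mechanism swapping $K_{+}$ and $K_{-}$. One must therefore use antiunitarity (rather than unitarity) consistently in the pairing identity above, and verify carefully that $J$ preserves $D(L)$ so that $Jf$ is a legitimate test element against $L^{\ast}$.
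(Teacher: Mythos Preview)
Your argument is correct and is exactly the classical proof of von Neumann's theorem on self-adjoint extensions via deficiency indices. Note, however, that the paper does not supply a proof of this proposition at all: it is quoted as a standard result from functional analysis and used as a black box in \cref{sec:conclusion}. So there is nothing to compare against; your write-up simply fills in the omitted textbook argument, and the bookkeeping with the antiunitary $J$ swapping $K_+$ and $K_-$ is handled cleanly.
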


These results concern self-adjoint extensions of symmetric operators,
but in our setting $(\A,\cyl(M))$ is \emph{skew-symmetric}, so
in order to apply these results we need to consider $\tilde\A=-i \A$. 
The operator $\tilde\A$ is now a densely defined, symmetric operator on $L^2_\C(\XX,m)$,
and it commutes with
\begin{equation*}
L^2_\C(\XX,m)\ni F \mapsto JF \in L^2_\C(\XX,m), \quad JF(x)=\overline{F(-x)},\,x\in\XX,
\end{equation*}
which satisfies the hypothesis of \cref{prop:vonneumann}.
We conclude that there exists a self-adjoint extension of $\tilde\A$,
and by \cref{prop:stone} we can consider the associated group of unitaries $\U_t:L^2(\XX,m)\to L^2(\XX,m)$.
Here we encounter the first difficult open question
(which remains open also in the purely Gaussian and Poissonian case):

\begin{open}
	Is $(\tilde\A,\cyl(M))$ essentially self-adjoint? Equivalently, is the closure of $(\tilde\A,\cyl(M))$
	already a self-adjoint operator?
\end{open}

It is immediate to observe that $\U_t$ is unit-preserving: the constant $1$ belongs to $\cyl(M)$ and
\begin{equation*}
\U_0 1=\mbox{Id}1=1,\quad \frac{d}{dt}U_t 1=-i\A 1=0.
\end{equation*}
The only requirement left to satisfy the hypothesis of \cref{prop:koopman} and deduce that $\U_t$ is a Koopman group
associated to some group of transformations $\T_t$ on $(\XX,m)$,
thus obtaining a solution flow in the sense of \cref{def:flow}, is preservation of positivity.

\begin{open}
	Are there self-adjoint extensions of $(\tilde\A,\cyl(M))$ that generate a positivity preserving group of unitaries $\U_t$?
\end{open}

Indeed, in that case, by \cref{prop:koopman}, $\U_t$ would be the Koopman group of a one-parameter group $\T_t:(\XX,m)\to(\XX,m)$ of almost surely invertible, measurable, measure-preserving transformations, that is $\U_t F=F\circ \T_t$ for all $F\in L^2(\XX,m)$.
Existence of a solution flow in the sense of \cref{def:flow} would follow 
observing that transformations $\T_t$ produce solutions of Euler's dynamics in the sense
of \cref{def:sol}, see \cref{lem:cylinder}.

\subsection{Compact Surfaces Without Boundaries}
All the above arguments are easily repeated in the case where the domain $\D$ is replaced by a
compact surface without boundary, since the analytic (thus, functional analytic) setting is completely analogous.
We refer to \cite{Marchioro1994,Grotto2020b} for Euler's dynamics, both in the PDE case and point vortices model,
on surfaces such as the 2-dimensional torus and sphere, in particular to the second reference
for a comparison between those settings and the bounded domain case.

\subsection{About Point Vortices Approximations}
It should be possible to prove existence of solutions in the sense of \cref{def:sol} 
preserving a completely random measure $M\sim [a,q,\nu]$ as a scaling limit of point vortices systems,
that is considering the empirical measure of the particle system described in \cref{ssec:pv},
and taking a limit in which the number of vortices $N\to\infty$ and intensities of vortices are suitably rescaled.
This is the strategy of \cite{Flandoli2018,Grotto2020b} in the purely Gaussian case,
which we propose to investigate in our setting in future works.

One one hand, it might be possible to identify a suitably weak compactness criterion in spaces of distributions
so that finiteness of moments of $\nu$ is not necessary for compactness estimates,
leading to an even larger class of invariant measures.
On the other hand, building of solutions by means of such approximations clearly does not
imply any uniqueness result, and neither the flow property of \cref{def:flow}.

%\bibliography{isivbiblio}{}

\begin{thebibliography}{10}
	
	\bibitem{Adler1983}
	R.~J. Adler, D.~Monrad, R.~H. Scissors, and R.~Wilson.
	\newblock Representations, decompositions and sample function continuity of
	random fields with independent increments.
	\newblock {\em Stochastic Processes and their Applications}, 15(1):3--30, 1983.
	
	\bibitem{AdlerTaylor07}
	R.~J. Adler and J.~E. Taylor.
	\newblock {\em Random fields and geometry}.
	\newblock Springer-Verlag, Berlin, 2007.
	
	\bibitem{Albeverio2008}
	S.~Albeverio, V.~Barbu, and B.~Ferrario.
	\newblock Uniqueness of the generators of the 2{D} {E}uler and
	{N}avier-{S}tokes flows.
	\newblock {\em Stochastic Processes and their Applications},
	118(11):2071--2084, 2008.
	
	\bibitem{Albeverio2010}
	S.~Albeverio, V.~Barbu, and B.~Ferrario.
	\newblock Erratum to ``{U}niqueness of the generators of 2{D} {E}uler and
	{S}tokes flows'' [{S}tochastic {P}rocess. {A}ppl. 118 (11) (2008) 2071--2084]
	[mr2462289].
	\newblock {\em Stochastic Processes and their Applications}, 120(10):2102,
	2010.
	
	\bibitem{Albeverio2002}
	S.~Albeverio and B.~Ferrario.
	\newblock Uniqueness results for the generators of the two-dimensional {E}uler
	and {N}avier-{S}tokes flows. {T}he case of {G}aussian invariant measures.
	\newblock {\em Journal of Functional Analysis}, 193(1):77--93, 2002.
	
	\bibitem{Albeverio2003a}
	S.~Albeverio and B.~Ferrario.
	\newblock 2{D} vortex motion of an incompressible ideal fluid: the
	{K}oopman-von {N}eumann approach.
	\newblock {\em Infinite Dimensional Analysis, Quantum Probability and Related
		Topics}, 6(2):155--165, 2003.
	
	\bibitem{Albeverio2003}
	S.~Albeverio and B.~Ferrario.
	\newblock Invariant measures of {L}\'{e}vy-{K}hinchine type for 2{D} fluids.
	\newblock In {\em Probabilistic methods in fluids}, pages 130--143. World Sci.
	Publ., River Edge, NJ, 2003.
	
	\bibitem{Albeverio1979}
	S.~Albeverio, M.~Ribeiro~de Faria, and R.~H\o~egh Krohn.
	\newblock Stationary measures for the periodic {E}uler flow in two dimensions.
	\newblock {\em Journal of Statistical Physics}, 20(6):585--595, 1979.
	
	\bibitem{Albeverio1990}
	Sergio Albeverio and Ana~Bela Cruzeiro.
	\newblock Global flows with invariant ({G}ibbs) measures for {E}uler and
	{N}avier-{S}tokes two-dimensional fluids.
	\newblock {\em Communications in Mathematical Physics}, 129(3):431--444, 1990.
	
	\bibitem{Benfatto1987}
	G.~Benfatto, P.~Picco, and M.~Pulvirenti.
	\newblock On the invariant measures for the two-dimensional {E}uler flow.
	\newblock {\em Journal of Statistical Physics}, 46(3-4):729--742, 1987.
	
	\bibitem{Butta2018}
	Paolo Butt\`a and Carlo Marchioro.
	\newblock Long time evolution of concentrated {E}uler flows with planar
	symmetry.
	\newblock {\em SIAM Journal on Mathematical Analysis}, 50(1):735--760, 2018.
	
	\bibitem{DaPrato1992}
	Giuseppe Da~Prato and Jerzy Zabczyk.
	\newblock {\em Stochastic equations in infinite dimensions}, volume~44 of {\em
		Encyclopedia of Mathematics and its Applications}.
	\newblock Cambridge University Press, Cambridge, 1992.
	
	\bibitem{Delort1991}
	Jean-Marc Delort.
	\newblock Existence de nappes de tourbillon en dimension deux.
	\newblock {\em Journal of the American Mathematical Society}, 4(3):553--586,
	1991.
	
	\bibitem{Flandoli2018}
	Franco Flandoli.
	\newblock Weak vorticity formulation of 2{D} {E}uler equations with white noise
	initial condition.
	\newblock {\em Communications in Partial Differential Equations},
	43(7):1102--1149, 2018.
	
	\bibitem{Goodrich1980}
	K.~Goodrich, K.~Gustafson, and B.~Misra.
	\newblock On converse to {K}oopman's lemma.
	\newblock {\em Physica A}, 102(2):379--388, 1980.
	
	\bibitem{Grotto2020a}
	Francesco Grotto.
	\newblock Essential self-adjointness of {L}iouville operator for 2{D} {E}uler
	point vortices.
	\newblock {\em Journal of Functional Analysis}, 279(6):108635, 23, 2020.
	
	\bibitem{Grotto2020d}
	Francesco Grotto.
	\newblock Stationary solutions of damped stochastic 2-dimensional {E}uler's
	equation.
	\newblock {\em Electronic Journal of Probability}, 25:Paper No. 69, 24, 2020.
	
	\bibitem{Grotto2020}
	Francesco Grotto and Umberto Pappalettera.
	\newblock Burst of point vortices and non-uniqueness of 2d euler equations.
	
	\bibitem{Grotto2020b}
	Francesco Grotto and Marco Romito.
	\newblock A central limit theorem for {G}ibbsian invariant measures of 2{D}
	{E}uler equations.
	\newblock {\em Communications in Mathematical Physics}, 376(3):2197--2228,
	2020.
	
	\bibitem{Grotto2020c}
	Francesco Grotto and Marco Romito.
	\newblock Decay of correlation rate in the mean field limit of point vortices
	ensembles.
	\newblock {\em Stochastics and Dynamics}, 20(6):2040009, 16, 2020.
	
	\bibitem{Judovic1963}
	V.~I. Judovi\v{c}.
	\newblock Non-stationary flows of an ideal incompressible fluid.
	\newblock {\em \v{Z}urnal Vy\v{c}islitel' no\u{\i} Matematiki i
		Matemati\v{c}esko\u{\i} Fiziki}, 3:1032--1066, 1963.
	
	\bibitem{KW}
	S.~Kwapien and W.~Woyczynski.
	\newblock {\em Random series and stochastic integrals: single and multiple.}
	\newblock Birkh\"auser, 1992.
	
	\bibitem{LastPenrose18}
	G.~Last and M.~Penrose.
	\newblock {\em Lectures on the Poisson process}.
	\newblock Cambridge University Press, Cambridge, 2018.
	
	\bibitem{Marchioro1984}
	C.~Marchioro and M.~Pulvirenti.
	\newblock {\em Vortex methods in two-dimensional fluid dynamics}, volume 203 of
	{\em Lecture Notes in Physics}.
	\newblock Springer-Verlag, Berlin, 1984.
	
	\bibitem{Marchioro1994}
	Carlo Marchioro and Mario Pulvirenti.
	\newblock {\em Mathematical theory of incompressible nonviscous fluids},
	volume~96 of {\em Applied Mathematical Sciences}.
	\newblock Springer-Verlag, New York, 1994.
	
	\bibitem{MarinucciPeccati11}
	D.~Marinucci and G.~Peccati.
	\newblock {\em Random fields on the sphere. Representation, limit theorems and
		cosmological applications}.
	\newblock Cambridge University Press, Cambridge, 2011.
	
	\bibitem{Metropolis1991}
	N.~Metropolis and Gian-Carlo Rota.
	\newblock Symmetry classes: functions of three variables.
	\newblock {\em American Mathematical Monthly}, 98(4):328--332, 1991.
	
	\bibitem{NouPec12}
	I.~Nourdin and G.~Peccati.
	\newblock {\em Normal approximations with {M}alliavin calculuso. From Stein's
		method to universality}.
	\newblock Cambridge University Press, Cambridge, 2012.
	
	\bibitem{NPR19}
	I.~Nourdin, G.~Peccati, and M.~Rossi.
	\newblock Nodal statistics of planar random waves.
	\newblock {\em Communications in Mathematical Physics}, 369(1), 2019.
	
	\bibitem{Nualart1995}
	David Nualart.
	\newblock {\em The {M}alliavin calculus and related topics}.
	\newblock Probability and its Applications (New York). Springer-Verlag, New
	York, 1995.
	
	\bibitem{NS}
	David Nualart and Wim Schoutens.
	\newblock Chaotic and predictable representations for l{\'e}vy processes.
	\newblock {\em Stochastic Processes and their Applications}, 90(1):109--122,
	2000.
	
	\bibitem{PeccatiTaqqu10}
	G.~Peccati and M.S. Taqqu.
	\newblock {\em Wiener chaos: moments, cumulants and diagrams}.
	\newblock Springer, Milan, 2010.
	
	\bibitem{Peccati2008}
	Giovanni Peccati and Murad~S. Taqqu.
	\newblock Limit theorems for multiple stochastic integrals.
	\newblock {\em ALEA. Latin American Journal of Probability and Mathematical
		Statistics}, 4:393--413, 2008.
	
	\bibitem{pedersen2003levy}
	Jan Pedersen.
	\newblock The l\'{e}vy-ito decomposition of an independently scattered random
	measure.
	\newblock 2003.
	
	\bibitem{Rajput1989}
	Balram~S. Rajput and Jan Rosi\'{n}ski.
	\newblock Spectral representations of infinitely divisible processes.
	\newblock {\em Probability Theory and Related Fields}, 82(3):451--487, 1989.
	
	\bibitem{Rota1997}
	Gian-Carlo Rota and Timothy~C. Wallstrom.
	\newblock Stochastic integrals: a combinatorial approach.
	\newblock {\em The Annals of Probability}, 25(3):1257--1283, 1997.
	
	\bibitem{Samorodnitsky1994}
	Gennady Samorodnitsky and Murad~S. Taqqu.
	\newblock {\em Stable non-{G}aussian random processes}.
	\newblock Stochastic Modeling. Chapman \& Hall, New York, 1994.
	\newblock Stochastic models with infinite variance.
	
	\bibitem{sato}
	K.-I. Sato.
	\newblock {\em L\'{e}vy processes and infinitely divisible distributions.}
	\newblock Cambridge University Press, Cambridge, 1999.
	
	\bibitem{Sato2004}
	Ken-Iti Sato.
	\newblock Stochastic integrals in additive processes and application to
	semi-{L}\'{e}vy processes.
	\newblock {\em Osaka Journal of Mathematics}, 41(1):211--236, 2004.
	
	\bibitem{Schochet1995}
	Steven Schochet.
	\newblock The weak vorticity formulation of the {$2$}-{D} {E}uler equations and
	concentration-cancellation.
	\newblock {\em Communications in Partial Differential Equations},
	20(5-6):1077--1104, 1995.
	
	\bibitem{Szulga1991}
	Jerzy Szulga.
	\newblock Multiple stochastic integrals with respect to symmetric infinitely
	divisible random measures.
	\newblock {\em The Annals of Probability}, 19(3):1145--1156, 1991.
	
	\bibitem{Yadrenko83}
	M.~I. Yadrenko.
	\newblock {\em Spectral Theory of Random Fields}.
	\newblock Optimization Software, Inc., Publications Division, New York, 1983.
	
\end{thebibliography}
\bibliographystyle{plain}

\end{document}